\renewcommand{\@biblabel}[1]{#1.} 
\newtheorem{Theorem}{Theorem}[section]
\newtheorem{Corollary}[Theorem]{Corollary} 
\newtheorem{rem1}[Theorem]{Remark} 
\newenvironment{Remark}{\begin{rem1}\normalfont}{\end{rem1}}
\newtheorem*{rem}{Remark} 
\newenvironment{Remark*}{\begin{rem}\normalfont}{\end{rem}}
\newtheorem*{rems}{Remarks} 
\newtheorem*{sol}{Solution}
\newtheorem*{nt}{Notes}
\renewcommand{\@biblabel}[1]{#1.}
\newcommand{\suml}{\sum\limits}
\newcommand{\sumvec}[2]{#1_1+\cdots + #1_#2} 
\newcommand{\qelementary}[2]{q^{e_2 ({\mathbf #1})}}  
\newcommand{\htqelementary}[2]{q^{#2e_2(\mathbf #1) }}  
\newcommand\sumj{{\left| \mathbf {j} \right|}}
\newcommand\sumjcurl{{\left| \mathbf {\widetilde{j}} \right|}}
\newcommand\sumvecl{{\left| \mathbf {l} \right|}}
\newcommand\sumk{{\left| \mathbf {k} \right|}}
\newcommand\sumkcurl{{\left| \mathbf {\widetilde{k}} \right|}}
\newcommand{\y}{{\mathbf y}}
\renewcommand{\k}{{\mathbf k}}
\renewcommand{\j}{{\mathbf j}}
 \newcommand{\h}{\mathbf h}
 \newcommand{\x}{\mathbf x}
\newcommand{\kvec}{{k_1,\dots,k_n}}
\newcommand{\ban}[1]{\hbox{$\boldsymbol A_{\boldsymbol #1}$}}
\newcommand{\qrfac}[2]{{\left({#1}; q \right)_{#2}}} 
\newcommand{\pqrfac}[3]{{\left({#1};#3\right)_{#2}}}
\newcommand{\triprod}[1]{\prod\limits_{1\le r < s \le #1}}
\newcommand{\sqprod}[1]{\prod\limits_{r, s =1}^{#1}} 
\newcommand{\smallprod}[1]{\prod\limits_{r =1}^{#1}} 
\newcommand{\xover}[1]{{#1_{r}}/{#1_{s}}}
\newcommand{\powerq}[2]{q^{\suml_{r=1}^{#2} (r-1)#1_r }} 
\newcommand{\hpowerq}[2]{q^{h\suml_{r=1}^{#2} (r-1)#1_r }} 
\newcommand{\htpowerq}[3]{q^{#3\suml_{r=1}^{#2} (r-1)#1_r }} 
\newcommand{\vandermonde}[3]{\triprod{#3} \! 
\frac{1-q^{#2_r-#2_s} \xover {#1} }{1-\xover{#1}}
}
\newcommand{\hvandermonde}[3]{\triprod{#3} \! 
\frac{1- q^{h(#2_r-#2_s)} \xover {#1}}{1-\xover{#1}}
}
\newcommand{\htvandermonde}[4]{\triprod{#3} \! 
\frac{1-q^{#4(#2_r-#2_s)}\xover {#1} }{1-\xover{#1}}
}
\newcommand{\vandespecial}[2]{\triprod{#2} \! 
\frac{1- q^{#2(#1_r-#1_s) +r-s}}{1-q^{r-s}}
}
\newcommand{\vandespecialt}[3]{\triprod{#2} \!
\frac{1- q^{#3(#2(#1_r-#1_s) +r-s)}}{1-q^{#3(r-s)}}
}
\newcommand{\multisum}[2]{\underset{r=1, \dots, #2}{\sum\limits_{#1_r\geq 0}}} 
\numberwithin{equation}{section}
\begin{document} 
\title{Heine's method and $\ban n$ to $\ban m$ transformation formulas}

\author[Gaurav Bhatnagar]{Gaurav Bhatnagar*
}
\thanks{*Research supported in part by the Austrian Science Fund (FWF): Project F 50 SFB (Algorithmic and Enumerative Combinatorics). 
Part of this work was done while the author was a Visiting Scientist at the Indian Statistical Institute, Delhi Centre, 
7 S.~J.~S.~Sansanwal Marg, Delhi 110016.}
\address{Fakult\"at f\"ur Mathematik,  Universit\"at Wien \\
Oskar-Morgenstern-Platz 1, 1090 Wien, Austria.}
\email{bhatnagarg@gmail.com}

\date{\today}



\keywords{Bibasic Heine Transformation Formula, $U(n+1)$ Basic Hypergeometric Series, $A_n$ Basic Hypergeometric Series, Ramanujan's $_2\phi_1$ transformations, $q$-Lauricella functions}
\subjclass[2010]{33D65, 33D67}

\begin{abstract}
We apply Heine's method---the key idea Heine used in 1846 to derive his famous transformation formula for $_2\phi_1$ series---to multiple basic series over the root system of type $A$. In the classical case, this leads to a bibasic extension of Heine's formula, which was implicit in a paper of Andrews which he wrote in 1966. As special cases, we recover extensions of many of Ramanujan's $_2\phi_1$ transformations.  In addition, we extend previous work of the author regarding a bibasic extension of Andrews'  $q$-Lauricella function, and show how to obtain very general transformation formulas of this type. The results obtained include transformations of an $n$-fold sum into an $m$-fold sum. 
\end{abstract}

\maketitle 


\section{Introduction: Heine's method}
Heine's method, so named by Andrews and Berndt \cite{AB2009}, was what Heine used to obtain his celebrated transformation formula between two basic hypergeometric series. Using Heine's idea itself, we can extend Heine's identity to a bibasic transformation formula \cite{GB2017}. This was used to provide a unified treatment to many of Ramanujan's $_2\phi_1$ transformations presented in \cite[Ch.\ 1]{AB2009}. The objective of this paper is to apply Heine's idea in the context of multiple basic hypergeometric series over the root systems of type~A. 

It is useful to recall Heine's method. We need the notation of $q$-rising factorials from Gasper and Rahman \cite{GR90}. The $q$-rising factorial is defined as $\qrfac{A}{0} :=1$, and when $k$ is a positive integer, 
\begin{equation*}
\qrfac{A}{k} := (1-A)(1-Aq)\cdots (1-Aq^{k-1}).
\end{equation*}
The parameter $q$ is called the \lq base'.  The infinite $q$-rising factorial is defined, for $|q|<1$, as
\begin{equation*}
\qrfac{A}{\infty} := \prod_{r=0}^{\infty} (1-Aq^r).
\end{equation*}
Observe that, for  $|q|<1$ \cite[Eq.\ (I.5)]{GR90},
\begin{equation}\label{elementary1}
\qrfac{A}{k} = \frac{\qrfac{A}{\infty}}{\qrfac{Aq^k}{\infty}},
\end{equation}
an identity that is used to define $q$-rising factorials when $k$ is an arbitrary complex number. 
The most fundamental of the $q$-summation theorems is the $q$-binomial theorem
\cite[Eq.\ (1.3.2)]{GR90}:
For $|z|<1$, $|q|<1$
\begin{equation}\label{q-bin}
\frac{\qrfac{az}{\infty}}{\qrfac{z}{\infty}} = 
\sum_{k=0}^{\infty} 
\frac{\qrfac{a}{k}}{\qrfac{q}{k}} z^k.
\end{equation}

Heine's transformation formula \cite[Eq.\ (1.4.1)]{GR90} is as follows.  For $|z|<1$, $|b|<1$:
\begin{equation}\label{heine}
\sum_{k=0}^{\infty} \frac{\qrfac{a}{k} \qrfac{b}{k} }{\qrfac{c}{k}\qrfac{q}{k}} z^k =
\frac{\qrfac{b}{\infty} \qrfac{az}{\infty}}{\qrfac{c}{\infty}\qrfac{z}{\infty}}
\sum_{j=0}^{\infty} \frac{\qrfac{c/b}{j} \qrfac{z}{j} }{\qrfac{az}{j}\qrfac{q}{j}} b^j.
\end{equation}
Heine's \cite{EH1846} proof of his transformation formula uses the $q$-Binomial theorem, and is as follows. 
\begin{align}
\sum_{k=0}^{\infty} \frac{\qrfac{a}{k} \qrfac{b}{k} }{\qrfac{q}{k}\qrfac{c}{k}} z^k 
&=
\frac{\qrfac{b}{\infty} }{\qrfac{c}{\infty}}
\sum_{k=0}^{\infty} \frac{\qrfac{a}{k} }{\qrfac{q}{k}}z^k 
\frac{\qrfac{cq^k}{\infty} }{\qrfac{bq^k}{\infty}} 
\quad \text{ (using \eqref{elementary1})}
 \cr
&= \frac{\qrfac{b}{\infty} }{\qrfac{c}{\infty}}
\sum_{k=0}^{\infty} \frac{\qrfac{a}{k} }{\qrfac{q}{k}}z^k 
\sum_{j=0}^{\infty} \frac{\qrfac{c/b}{j}  }{\qrfac{q}{j}} (bq^k)^j
\quad \text{ (using \eqref{q-bin})}
\cr
 &= \frac{\qrfac{b}{\infty} }{\qrfac{c}{\infty}}
\sum_{j=0}^{\infty} \frac{\qrfac{c/b}{j}  }{\qrfac{q}{j}} b^j
\sum_{k=0}^{\infty} \frac{\qrfac{a}{k} }{\qrfac{q}{k}}(zq^j)^k 
 \cr
 &= \frac{\qrfac{b}{\infty} }{\qrfac{c}{\infty}}
\sum_{j=0}^{\infty} \frac{\qrfac{c/b}{j}  }{\qrfac{q}{j}} b^j
\frac{\qrfac{azq^j}{\infty} }{\qrfac{zq^j}{\infty}}
\quad \text{ (using \eqref{q-bin} again)}
 \cr
 &= \frac{\qrfac{b}{\infty}\qrfac{az}{\infty} }{\qrfac{c}{\infty}\qrfac{z}{\infty}}
\sum_{j=0}^{\infty} \frac{\qrfac{c/b}{j} \qrfac{z}{j} }{\qrfac{q}{j}\qrfac{az}{j}} b^j
\quad \text{ (using \eqref{elementary1} again).}
\notag
\end{align}

There are several mild variations of Heine's original idea that we will use in this paper. Virtually the same proof works with more general bases $q^h$ and $q^t$, where $h$ and $t$ are complex numbers. This leads to a transformation formula that follows from a very general identity of Andrews \cite{An1966a},  but was stated explicitly only in  \cite[Eq.\ (2.4)]{GB2017}.  
\begin{equation}\label{GB-sym-Heine}
\sum_{k=0}^{\infty} \frac{\pqrfac{a}{k}{q^h}}{\pqrfac{q^h}{k}{q^h}}\frac { \pqrfac{w}{hk}{q^t} }{\pqrfac{bw}{hk}{q^t}} z^k 
=
\frac{\pqrfac{w}{\infty}{q^t}\pqrfac{az}{\infty}{q^h} }{\pqrfac{bw}{\infty}{q^t} \pqrfac{z}{\infty}{q^h}}
\sum_{j=0}^{\infty} \frac{\pqrfac{b}{j}{q^t}} {\pqrfac{q^t}{j}{q^t} }\frac {\pqrfac{z}{tj}{q^h}}{\pqrfac{az}{tj}{q^h}} w^j.
\end{equation}
Here we require $|q^h|<1$, $|q^t|<1$ and $|q^{ht}|<1$ for the $q$-rising factorials to be defined, and 
$|w|<1$ and $|z|<1$ for absolute convergence of the two series. For details of how to test for convergence, refer to \cite{GB2017}. This reduces to Heine's transformation when $h=1=t$, and $(w,  b)$ is replaced by $(b, c/b)$.  

The second variation of Heine's ideas come from using multiple series extensions of the $q$-binomial theorem, due to Milne \cite{Milne1997}, Milne and Lilly \cite{ML1995}, Gustafson and Krattenthaler \cite{GK1996}, Kajihara \cite{Ka2012}, and one implicit in the work of the author with Schlosser \cite{BS2018a}.  The series are all of the form
$$
\sum\limits_{\substack{{k_r\geq 0 } \\  
{r =1,2,\dots, n}}} S(\k)
$$
where $\k=(\kvec )$, and  $k_1, k_2, \dots, k_n$ are non-negative integers. The positive integer $n$ is called the {\em dimension} of the sum. When $n=1$, we refer to the corresponding identity as {\em classical}. 
We use the notation $\sumk:= \sumvec{k}{n}$ for the sum of components of $\k$.
These type of series are recognized by the presence of the so-called  \lq\lq Vandermonde factor" of type $A$, namely
$$\vandermonde{x}{k}{n}.$$ Sometimes this factor is hidden in the series. 
For example, here is an identity we discovered during the course of our study. For $|q|<1$, $|q^t|<1$, we have
\begin{align}
\pqrfac{(-aq)^{n}}{\infty}{q^{n}} 
\multisum{j}{m} & \Bigg( \vandespecial{j}{m}
\notag \\
& \times
\smallprod{m} \frac{1}{\pqrfac{q^{r}}{mj_r}{q} } \cdot
  \frac{b^{m\sumj} }{\pqrfac{(-aq)^{n}}{tm\sumj}{q^{n}}}
  \notag \\
& \times
q^{2m\sum\limits_{r=1}^m (r-1)j_r - m(m-1)\sumj}
q^{\sum\limits_{r=1}^m {mj_r+1\choose 2}}
\Bigg) \notag \\
= 
\pqrfac{(-bq)^{m}}{\infty}{q^{m}} \multisum{k}{n} \Bigg( & \vandespecial{k}{n}
\cr
&\times \smallprod{n} \frac{1}{\pqrfac{q^{r}}{nk_r}{q} }\cdot
 \frac{a^{n\sumk} }{\pqrfac{(-bq)^{m}}{tn\sumk}{q^{m}}}
 \cr
&\times
q^{2n\sum\limits_{r=1}^n (r-1)k_r -n(n-1)\sumk}
q^{\sum\limits_{r=1}^n {nk_r+1\choose 2}}
\Bigg),
\label{an-m-GB-1.4.17b}
\end{align}
obtained as a special case of \eqref{an-m-GB2016-eq26-b}  below. 
Compare this with Ramanujan's identity \cite[Entry 1.4.17]{AB2009}, to which \eqref{an-m-GB-1.4.17b} reduces when $n$ and $m$ are $1$:
\begin{align}\label{1.4.17}
\pqrfac{-aq}{\infty}{q} 
\sum_{j=0}^{\infty} 
\frac {b^j q^{{j+1\choose 2}} }{\pqrfac{q}{j}{q} \pqrfac{-aq}{tj}{q}} 
= 
\pqrfac{-bq}{\infty}{q} \sum_{k=0}^{\infty} 
\frac{ a^k q^{{k+1\choose 2}} }{\pqrfac{q}{k}{q} \pqrfac{-bq}{tk}{q}}.
\end{align}

As we will see, Heine's method, though more than 150 years old, is still  surprisingly useful. In \S \ref{sec:GB-Heine}, we provide a few examples of multivariable transformation formulas generalizing \eqref{GB-sym-Heine}. A feature of these formulas is that they transform an $n$-dimensional sum into an $m$-dimensional sum. This is followed by some examples of multiple series extensions of Ramanujan's transformations in \S \ref{sec:ramanujan-2p1}. In the rest of the paper we explore some other variations of Heine's method, leading eventually to a master theorem describing such results in \S \ref{sec:qlauricella}. 

\section{Heine's method: The bibasic Heine transformation formula}\label{sec:GB-Heine}
In this section, we will give four multiple series extensions of \eqref{GB-sym-Heine}, using different $A_n$ extensions of the $q$-binomial theorem. Our intention is to illustrate the approach, not to provide a comprehensive list. There are six multiple $q$-binomial theorems of this kind that can be combined together to give 21 such results. 

We begin by using a theorem of Milne and Lilly \cite[Th.\ 4.7]{ML1995}:
\begin{align}
\smallprod{n} \frac{\qrfac{a_r z /x_r}{\infty}}{\qrfac{z/x_r}{\infty}}  =\multisum{k}{n}&  \Bigg(  \vandermonde{x}{k}{n} 
\sqprod{n} \frac{\qrfac{a_s\xover{x}}{k_r} }{\qrfac{q\xover{x}}{k_r} } \notag \\
&\times 
z^{\sumk} \powerq{k}{n}   \qelementary{k}{n} \smallprod{n} x_r^{-k_r}
\Bigg).
\label{an-qbin2}
\end{align}
The variables $a_1, \dots , a_n$,   $x_1,\dots,x_n$, and  $z$ are indeterminate,  and are such that the terms in the sum are well-defined. We require $|q|<1$ for convergence of the infinite products. In addition, for convergence  of the multiple series, we require  $|z/x_r|<1$, for $r=1, 2, \dots, n$. Here we use the notation $e_2(\k)$ for the elementary symmetric function of degree $2$ in the variables $\k=(k_1,k_2,\dots,k_n)$. It is given by
$$e_2(\k) = {\sumk \choose 2} -\sum_{r=1}^n {k_r\choose 2}.$$

\begin{Theorem}[A bibasic Heine transformation formula; $A_n \to A_{m}$]\label{th:an-m-GB-Heine7} Let $0<|q^h|<1$,  $0<|q^t|<1$ and $0<\left| q^{ht}\right|<1$.  
 Further,  let
$|z/x_r|<1$, for $r=1,2,\dots, n$; and $|w/y_r|<1$, for $r=1,2,\dots, m$.  Then
\begin{align}
\multisum{k}{n} \Bigg( & \hvandermonde{x}{k}{n} 
\sqprod{n} \frac{\pqrfac{a_s\xover{x}}{k_r}{q^h} }{\pqrfac{q^h\xover{x}}{k_r}{q^h} } \notag \\
&\times \smallprod{m} \frac{\pqrfac{w/y_r}{h\sumk}{q^t}}{\pqrfac{b_rw/y_r}{h\sumk}{q^t}} \cdot
z^{\sumk} \hpowerq{k}{n} 
  \htqelementary{k}{h} \smallprod{n} x_r^{-k_r}
\Bigg) \notag \\
= \smallprod{m} & \frac{\pqrfac{w/y_r}{\infty}{q^t}}{\pqrfac{b_rw/y_r}{\infty}{q^t}} 
\smallprod{n} \frac{\pqrfac{a_r z /x_r}{\infty}{q^h}}{\pqrfac{z/x_r}{\infty}{q^h}}  \notag \\
\times \multisum{j}{m} & \Bigg( \htvandermonde{y}{j}{m}{t}
\sqprod{m} \frac{\pqrfac{{b_sy_r}/{y_s}}{j_r}{q^t} }{\pqrfac{q^t\xover{y}}{j_r}{q^t} } \notag \\
\times &  \smallprod{n} \frac{\pqrfac{z/x_r}{t\sumj}{q^h}}{\pqrfac{a_rz/x_r}{t\sumj}{q^h}} \cdot
w^{\sumj } \htpowerq{j}{m}{t} 
 \htqelementary{j}{t} \smallprod{m} y_r^{-j_r}
\Bigg). \label{an-m-GB-Heine7}
\end{align}
\end{Theorem}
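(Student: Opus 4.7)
The plan is to follow Heine's original five-step argument almost verbatim, replacing the classical $q$-binomial theorem \eqref{q-bin} by the Milne--Lilly $A_n$ extension \eqref{an-qbin2} and using \eqref{elementary1} twice to move between finite and infinite $q$-rising factorials. The key observation making this work cleanly is that in the $\k$-sum on the LHS of \eqref{an-m-GB-Heine7}, every factor \emph{except} $\smallprod{m}\pqrfac{w/y_r}{h\sumk}{q^t}/\pqrfac{b_rw/y_r}{h\sumk}{q^t}$ is already in the shape of the RHS of \eqref{an-qbin2} (with base $q^h$); likewise, the $\j$-sum on the RHS of \eqref{an-m-GB-Heine7} will appear as the RHS of \eqref{an-qbin2} with base $q^t$, decorated by one extra factor produced in the final step.

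First, I would apply \eqref{elementary1} on base $q^t$ to split
\[
\frac{\pqrfac{w/y_r}{h\sumk}{q^t}}{\pqrfac{b_rw/y_r}{h\sumk}{q^t}}
= \frac{\pqrfac{w/y_r}{\infty}{q^t}}{\pqrfac{b_rw/y_r}{\infty}{q^t}}\cdot
\frac{\pqrfac{b_rwq^{th\sumk}/y_r}{\infty}{q^t}}{\pqrfac{wq^{th\sumk}/y_r}{\infty}{q^t}}.
\]
The first ratio is $\k$-independent and pulls out of the sum. The second ratio (a product over $r=1,\ldots,m$) is exactly the LHS of \eqref{an-qbin2} after the substitutions $q\mapsto q^t$, $x_r\mapsto y_r$, $a_r\mapsto b_r$, $z\mapsto wq^{th\sumk}$; applying \eqref{an-qbin2} expands it into an $m$-fold sum over $\j$. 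The only coupling between this new $\j$-sum and the original $\k$-sum lies in the scalar factor $(wq^{th\sumk})^{\sumj}=w^{\sumj}q^{th\sumk\sumj}$.

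Second, I would interchange the orders of summation---valid under the convergence hypotheses $|z/x_r|<1$, $|w/y_r|<1$, and $|q^{ht}|<1$, which force absolute convergence of the resulting double series---and absorb $q^{th\sumk\sumj}=(q^{th\sumj})^{\sumk}$ into the $z^{\sumk}$ factor. The inner $\k$-sum is now precisely the RHS of \eqref{an-qbin2} with base $q^h$ and $z$ replaced by $zq^{th\sumj}$, so it collapses to $\smallprod{n}\pqrfac{a_rzq^{th\sumj}/x_r}{\infty}{q^h}/\pqrfac{zq^{th\sumj}/x_r}{\infty}{q^h}$.

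Finally, a second application of \eqref{elementary1}, this time on base $q^h$, rewrites this last product as
\[
\smallprod{n}\frac{\pqrfac{a_rz/x_r}{\infty}{q^h}}{\pqrfac{z/x_r}{\infty}{q^h}}\cdot
\smallprod{n}\frac{\pqrfac{z/x_r}{t\sumj}{q^h}}{\pqrfac{a_rz/x_r}{t\sumj}{q^h}}.
\]
Collecting the two $\k$-independent infinite-product prefactors in front of the $\j$-sum and inserting the remaining $t\sumj$ factor inside it yields exactly the RHS of \eqref{an-m-GB-Heine7}. I do not expect a genuine conceptual obstacle; the only real care point is bookkeeping, namely verifying that each $q$-rising factorial of length $h\sumk$ (resp.\ $t\sumj$) picks up its $q^{ht\sumk\sumj}$ shift with the correct base when the viewpoint alternates between the two sums, and that the Vandermonde and $e_2$ factors pass through the swap undisturbed because each depends on only one of $\k$ or $\j$.
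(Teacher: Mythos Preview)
Your proposal is correct and follows essentially the same approach as the paper's own proof: both apply \eqref{elementary1} to split off the $\k$-independent prefactor, expand the remaining infinite product via \eqref{an-qbin2} (with $q\mapsto q^t$, $x_r\mapsto y_r$, $a_r\mapsto b_r$, $z\mapsto wq^{th\sumk}$), interchange the two multi-sums, collapse the inner $\k$-sum by \eqref{an-qbin2} again (with $q\mapsto q^h$, $z\mapsto zq^{th\sumj}$), and finish with a second use of \eqref{elementary1}. Your write-up is in fact more explicit about the bookkeeping than the paper's sketch.
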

\begin{proof} The proof is very similar in structure to the proof of \eqref{heine}. We begin with the left hand side, and write the factors
$$\smallprod{m} \frac{\pqrfac{w/y_r}{h\sumk}{q^t}}{\pqrfac{b_rw/y_r}{h\sumk}{q^t}}$$
as 
$$\smallprod{m} \frac{\pqrfac{w/y_r}{\infty}{q^t}}{\pqrfac{b_rw/y_r}{\infty}{q^t}}
\cdot\smallprod{m} \frac{\pqrfac{b_rwq^{th\sumk}/y_r}{\infty}{q^t}}{\pqrfac{wq^{th\sumk}/y_r}{\infty}{q^t}}.$$
We now use the $n=m$, $x_r\mapsto y_r$, $q\mapsto q^t$, $z\mapsto wq^{th\sumk}$, and 
$a_s\mapsto b_s$ case of \eqref{an-qbin2} to expand the second product. In this manner, we obtain a double sum, which can be represented symbolically as follows:
$$ \smallprod{m} \frac{\pqrfac{w/y_r}{\infty}{q^t}}{\pqrfac{b_rw/y_r}{\infty}{q^t}}
\multisum{k}{n} \ \ { } \multisum{j}{m} (\cdots) q^{\left(th\sumk\right)\sumj}.$$
On interchanging the sums, this can be written as
$$ \smallprod{m} \frac{\pqrfac{w/y_r}{\infty}{q^t}}{\pqrfac{b_rw/y_r}{\infty}{q^t}}
\multisum{j}{m} \ \ { } \multisum{k}{n} (\cdots) q^{\left(th\sumj\right)\sumk}.$$
Now the $q\mapsto q^h$, and  $z\mapsto zq^{th\sumj}$ case of \eqref{an-qbin2} and an elementary  
computation using \eqref{elementary1} immediately yields the right hand side.
\end{proof}
\begin{Remark}\label{rem:convergence} The absolute convergence of the series involved is shown using the multiple power series ratio test, and standard methods given in, for example, Milne \cite{Milne1997}. See also \cite{GB2017} where convergence requirements of terms such as $\pqrfac{w}{hk}{q^t}$ is explained. The condition $0<\left| q^{ht}\right|<1$ comes from such terms. The conditions $0<|q^h|<1$ and $0<|q^t|<1$ are required for the convergence of the infinite products appearing in the transformation formula.  From now on, we will refer to  these conditions as the {\em usual convergence conditions}. 
\end{Remark}

Observe that the  $A_n$ $q$-binomial theorem  \eqref{an-qbin2} is used twice in the proof of 
\eqref{an-m-GB-Heine7}.  A variation of this procedure is to use \eqref{an-qbin2} and another $A_n$ $q$-binomial theorem. In the next transformation formula, we use an $A_n$ $q$-binomial theorem obtained as a special case from an $A_n$ $_1\psi_1$ sum of Gustafson and Krattenthaler \cite[Eq.~(1.10)]{GK1996}, where we set $B=q$ and relabel some parameters. This result is: for $|z|<1$, 
\begin{align}
\smallprod{n} \frac{\qrfac{a zq^{r-1}}{\infty}}{\qrfac{zq^{r-1}}{\infty}}=
\multisum{k}{n} & \Bigg(  \vandermonde{x}{k}{n} 
 \notag \\
&\times 
\smallprod{n} \frac{\qrfac{a}{k_r} }{\qrfac{q}{k_r} } \cdot  z^{\sumk} \powerq{k}{n}  
\Bigg). 
\label{an-qbin3}
\end{align}

\begin{Theorem}[A bibasic Heine transformation formula; $A_n \to A_{m}$]
\label{th:an-m-GB-Heine8}
In addition to the usual convergence conditions, let $|w|<1$ and 
$|z/x_r|<1$, for $r=1, 2, \dots, n$. Then
\begin{align}
\multisum{k}{n} & \Bigg( \htvandermonde{x}{k}{n}{h}
\sqprod{n} \frac{\pqrfac{a_s\xover{x}}{k_r} {q^h}}{\pqrfac{q^h\xover{x}}{k_r} {q^h} } \notag \\
\times &  \smallprod{m} \frac{\pqrfac{wq^{t(r-1)}}{h\sumk}{q^t}}{\pqrfac{bwq^{t(r-1)}}{h\sumk}{q^t}}
\cdot z^{\sumk} \hpowerq{k}{n} 
 \htqelementary{k}{h} \smallprod{n} x_r^{-k_r}
\Bigg)
\notag \\
= \smallprod{m} & \frac{\pqrfac{wq^{t(r-1)}}{\infty}{q^t}}{\pqrfac{bwq^{t(r-1)}}{\infty}{q^t}} 
\smallprod{n} \frac{\pqrfac{a_r z/x_r}{\infty}{q^h}}{\pqrfac{z/x_r}{\infty}{q^h}}  \notag \\
\times \multisum{j}{m} \Bigg( & \htvandermonde{y}{j}{m}{t} 
\smallprod{m} \frac{\pqrfac{b}{j_r}{q^t} }{\pqrfac{q^t}{j_r}{q^t} } 
  \notag \\
&\times \smallprod{n} \frac{\pqrfac{z/x_r}{t\sumj}{q^h}}{\pqrfac{a_rz/x_r}{t\sumj}{q^h}} 
\cdot w^{\sumj} \htpowerq{j}{m}{t}  
\Bigg)
 \label{an-m-GB-Heine8}
\end{align}
\end{Theorem}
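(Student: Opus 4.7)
The proof plan closely parallels that of Theorem~\ref{th:an-m-GB-Heine7}: the only difference is that one of the two applications of the $A_n$ $q$-binomial theorem \eqref{an-qbin2} is replaced by the Gustafson--Krattenthaler variant \eqref{an-qbin3}. The asymmetry in the statement---with a single parameter $b$ on the $m$-side as opposed to the vector $(b_1,\dots,b_m)$ in Theorem~\ref{th:an-m-GB-Heine7}, and with the shifts $q^{t(r-1)}$ in place of $1/y_r$---reflects precisely this switch.

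My plan is to start from the left-hand side and peel off the $m$-fold product
\[
\smallprod{m} \frac{\pqrfac{wq^{t(r-1)}}{h\sumk}{q^t}}{\pqrfac{bwq^{t(r-1)}}{h\sumk}{q^t}}.
\]
Applying \eqref{elementary1} to each factor, this equals
\[
\smallprod{m} \frac{\pqrfac{wq^{t(r-1)}}{\infty}{q^t}}{\pqrfac{bwq^{t(r-1)}}{\infty}{q^t}}
\cdot
\smallprod{m} \frac{\pqrfac{bwq^{t(r-1)+th\sumk}}{\infty}{q^t}}{\pqrfac{wq^{t(r-1)+th\sumk}}{\infty}{q^t}}.
\]
The first product is independent of $\mathbf{k}$ and is pulled outside the sum; the second has exactly the form of the left-hand side of \eqref{an-qbin3} with $q\mapsto q^t$, $z\mapsto wq^{th\sumk}$, $a\mapsto b$, and $n\mapsto m$ (using $y_r$ in place of $x_r$ and summation index $\mathbf{j}$). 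Expanding via \eqref{an-qbin3} converts the left-hand side into a double sum in $\mathbf{k}$ and $\mathbf{j}$ whose coupling appears only through a factor $q^{th\sumk\,\sumj}$ coming from $(wq^{th\sumk})^{\sumj}$.

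Next I swap the order of summation; absolute convergence under the stated conditions (together with the standard multiple power series ratio test referenced in Remark~\ref{rem:convergence}) justifies this step. After the swap, the coupling factor $q^{th\sumk\,\sumj}$ combines with the $z^{\sumk}$ already present in the $\mathbf{k}$-summand to produce $(zq^{th\sumj})^{\sumk}$; all other $\mathbf{k}$-dependent data is exactly that of the left-hand side of \eqref{an-qbin2} with $q\mapsto q^h$. Applying \eqref{an-qbin2} with $z\mapsto zq^{th\sumj}$ collapses the inner $\mathbf{k}$-sum into the product
\[
\smallprod{n} \frac{\pqrfac{a_r z q^{th\sumj}/x_r}{\infty}{q^h}}{\pqrfac{zq^{th\sumj}/x_r}{\infty}{q^h}}.
\]
A final application of \eqref{elementary1} separates this into a $\mathbf{j}$-independent product $\smallprod{n}\pqrfac{a_r z/x_r}{\infty}{q^h}/\pqrfac{z/x_r}{\infty}{q^h}$ (which combines with the earlier pulled-out product to form the prefactor in \eqref{an-m-GB-Heine8}) and the $\mathbf{j}$-dependent ratio $\smallprod{n}\pqrfac{z/x_r}{t\sumj}{q^h}/\pqrfac{a_rz/x_r}{t\sumj}{q^h}$ that appears in the right-hand summand.

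The routine ingredients---bookkeeping of the powers of $q$, and the reduction of $q$-shifted factorials via \eqref{elementary1}---are straightforward. The one step that needs genuine care is verifying that the convergence hypotheses ($|w|<1$, $|z/x_r|<1$, and the usual base conditions on $q^h$, $q^t$, $q^{ht}$) actually license both the initial expansion and the interchange of summation, since the intermediate double sum involves $q$-shifted arguments $wq^{th\sumk}$ and $zq^{th\sumj}$ whose absolute values depend on $\sumk$ and $\sumj$. This is the main technical point, and it is handled exactly as in \cite{GB2017} and Remark~\ref{rem:convergence}.
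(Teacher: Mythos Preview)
Your proof is correct and follows exactly the approach the paper indicates: it is the argument of Theorem~\ref{th:an-m-GB-Heine7} with the single change that the expansion of the $m$-fold product on the left-hand side uses the Gustafson--Krattenthaler $q$-binomial theorem \eqref{an-qbin3} (with $q\mapsto q^t$, $a\mapsto b$, $z\mapsto wq^{th\sumk}$, $n\mapsto m$) in place of \eqref{an-qbin2}, while the inner $\mathbf{k}$-sum is still collapsed by \eqref{an-qbin2}. The paper's own proof says precisely this, in one sentence.
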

\begin{proof} The proof is analogous to the proof of Theorem~\ref{th:an-m-GB-Heine7}. The only difference is that we use \eqref{an-qbin3} on the left hand side to expand it as a double sum. The series on the right hand side converges when $|w|<1$.
\end{proof}

Observe that two  $A_n$ $q$-binomial theorems, namely \eqref{an-qbin2} and \eqref{an-qbin3} are used in the proof of \eqref{an-m-GB-Heine8}. Both are special cases of \eqref{an-m-GB-Heine8}. 
Take $h=1$ and $c=b$ to recover \eqref{an-qbin2}. 
Instead, set $t=1$, $a_r=1$  for $r=1, 2, \dots, n$ to obtain \eqref{an-qbin3}, after some re-labeling of parameters.


Next we use an $A_n$ $q$-binomial theorem with an extra parameter. For $|z|<1$:
\begin{align}\label{q-bin-GB-MS}
\frac{\pqrfac{a_1\cdots a_n z}{\infty}{q}}{\pqrfac{z}{\infty}{q}} =
\multisum{k}{n} & \Bigg(  \vandermonde{x}{k}{n}
\sqprod{n} \frac{\pqrfac{a_s\xover{x}}{k_r}{q} }{\pqrfac{q\xover{x}}{k_r}{q} } 
 \notag \\
&\times 
\smallprod n 
\frac{\pqrfac{cx_r/a_1\cdots a_n}{k_r}{q} \pqrfac{cx_r}{\sumk}{q} }
{\pqrfac{cx_r}{k_r}{q} \pqrfac{cx_r/a_r}{\sumk}{q}} 
\cdot z^{\sumk} \powerq{k}{n} 
\Bigg).
\end{align}
This follows easily from a fundamental lemma in the author's work with Schlosser \cite[Th.\ 11.2]{BS2018a}, where we take $p=0$. 
Observe that \eqref{q-bin-GB-MS} reduces to \eqref{q-bin},  the classical $q$-binomial theorem when $n=1$, but has an extra parameter $c$ which appears when $n>1$. When $c=0$, it reduces to an $A_n$ $q$-binomial theorem of Milne \cite[Th.\ 5.38]{Milne1997}. However, on examining the proof in \cite{BS2018a} carefully, we see that \eqref{q-bin-GB-MS} follows from an $A_n$ generalization of Jackson's sum due to Milne \cite[Th.\ 6.17]{Milne1988}.  More precisely, if we expand the left hand side using the classical $q$-binomial theorem, and compare coefficients of $z^N$, we obtain a result which is an $A_{n-1}$ generalization of a Jackson summation theorem (written in the form \cite[Lemma 3.26]{BM1997} or  \cite[Th.\ 5.1]{HR2004} with $p=0$).

\begin{Theorem}[A bibasic Heine transformation formula; $A_n \to A_{m}$]
In addition to the usual convergence conditions, let $|z|<1$ and $|w|<1$. Then,
\begin{align}
\multisum{k}{n} \Bigg( & \htvandermonde{x}{k}{n}{h}
\sqprod{n} \frac{\pqrfac{a_s\xover{x}}{k_r}{q^h} }{\pqrfac{q^h\xover{x}}{k_r}{q^h} } 
\cdot z^{\sumk} \hpowerq{k}{n} 
 \notag \\
&\times 
\smallprod n 
\frac{\pqrfac{cx_r/a_1\cdots a_n}{k_r}{q^h} \pqrfac{cx_r}{\sumk}{q^h} }
{\pqrfac{cx_r}{k_r}{q^h} \pqrfac{cx_r/a_r}{\sumk}{q^h}} 
\cdot \frac{\pqrfac{w}{h\sumk}{q^t}}{\pqrfac{b_1\cdots b_mw}{h\sumk}{q^t}}
\Bigg) \notag \\
&=  \frac{\pqrfac{w}{\infty}{q^t}}{\pqrfac{b_1\cdots b_m w }{\infty}{q^t}}
\; \frac{\pqrfac{a_1\cdots a_n z}{\infty}{q^h}}{\pqrfac{z}{\infty}{q^h}}\notag \\
\times \multisum{j}{m} & \Bigg( \htvandermonde{y}{j}{m}{t}
\sqprod{m} \frac{\pqrfac{b_sy_r/y_s }{j_r}{q^t}}{\pqrfac{q^t\xover{y}}{j_r}{q^t} } 
\cdot w^{\sumj} \htpowerq{j}{m}{t} 
\cr
&\times \smallprod m 
\frac{\pqrfac{dy_r/b_1\cdots b_m}{j_r}{q^t} \pqrfac{dy_r}{\sumj}{q^t} }
{\pqrfac{dy_r}{j_r}{q^t} \pqrfac{dy_r/b_r}{\sumj}{q^t}} 
\cdot \frac{\pqrfac{z}{t\sumj}{q^h}}{\pqrfac{a_1\cdots a_nz}{t\sumj}{q^h}}
\Bigg). \label{an-m-GB-Heine1}
\end{align}
\end{Theorem}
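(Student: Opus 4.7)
The plan is to follow exactly the same two-step pattern that proved Theorem~\ref{th:an-m-GB-Heine7}, but with the $A_n$ $q$-binomial theorem \eqref{q-bin-GB-MS} playing the role of \eqref{an-qbin2} on both sides. The extra parameters $c$ and $d$ in the left- and right-hand sides correspond to the extra parameter present in \eqref{q-bin-GB-MS}; since these factors involve only $k_r, \sumk$ (resp.\ $j_r, \sumj$), they will be produced automatically by the two applications of \eqref{q-bin-GB-MS} and will not interact with the main Heine-type bookkeeping.

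First I would start from the left-hand side and rewrite the single factor
$$\frac{\pqrfac{w}{h\sumk}{q^t}}{\pqrfac{b_1\cdots b_mw}{h\sumk}{q^t}}$$
by means of \eqref{elementary1} as
$$\frac{\pqrfac{w}{\infty}{q^t}}{\pqrfac{b_1\cdots b_mw}{\infty}{q^t}}\cdot\frac{\pqrfac{b_1\cdots b_m w q^{th\sumk}}{\infty}{q^t}}{\pqrfac{w q^{th\sumk}}{\infty}{q^t}}.$$
The first factor is $\k$-independent and pulls out in front. To the second factor I would apply \eqref{q-bin-GB-MS} with the substitutions $n\mapsto m$, $q\mapsto q^t$, $x_r\mapsto y_r$, $a_s\mapsto b_s$, $c\mapsto d$, and $z\mapsto wq^{th\sumk}$. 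This expands the tail as an $m$-fold sum in $\j$ that already contains the full $A_m$ Vandermonde, the $q^t$-shifted rising factorial ratios in the $b_s$, the $d$-dependent extra-parameter product, and $w^{\sumj} q^{t\sum_r(r-1)j_r}$, together with a coupling factor $q^{th\,\sumk\,\sumj}$.

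Next I would interchange the $\k$- and $\j$-sums (this is justified as in Remark~\ref{rem:convergence} under the stated hypotheses $|z|<1$, $|w|<1$, and the usual convergence conditions, by the multiple power series ratio test). In the resulting inner $\k$-sum the coupling factor is absorbed by replacing $z$ with $zq^{th\sumj}$, so the $\k$-sum is precisely the right-hand side of \eqref{q-bin-GB-MS} with base $q^h$ and argument $zq^{th\sumj}$. Applying \eqref{q-bin-GB-MS} in the reverse direction collapses this $\k$-sum to
$$\frac{\pqrfac{a_1\cdots a_n z q^{th\sumj}}{\infty}{q^h}}{\pqrfac{z q^{th\sumj}}{\infty}{q^h}}.$$
A last use of \eqref{elementary1} splits this quotient as
$$\frac{\pqrfac{a_1\cdots a_n z}{\infty}{q^h}}{\pqrfac{z}{\infty}{q^h}}\cdot\frac{\pqrfac{z}{t\sumj}{q^h}}{\pqrfac{a_1\cdots a_n z}{t\sumj}{q^h}},$$
and reassembling everything gives exactly the right-hand side of \eqref{an-m-GB-Heine1}.

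I expect no serious obstacle of a conceptual kind—the identity is morally \eqref{GB-sym-Heine} dressed up by applying the multiple $q$-binomial theorem \eqref{q-bin-GB-MS} twice. The only point requiring care is the clerical bookkeeping for the two collections of extra-parameter factors $\pqrfac{cx_r/a_1\cdots a_n}{k_r}{q^h}\pqrfac{cx_r}{\sumk}{q^h}/\bigl(\pqrfac{cx_r}{k_r}{q^h}\pqrfac{cx_r/a_r}{\sumk}{q^h}\bigr)$ and its $d$-analogue: these appear verbatim from \eqref{q-bin-GB-MS} in each of the two expansions and survive the sum interchange because they depend only on $\k$ (respectively on $\j$). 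Convergence of the expansions under the stated hypotheses is identical to the justification given for Theorems~\ref{th:an-m-GB-Heine7} and~\ref{th:an-m-GB-Heine8}.
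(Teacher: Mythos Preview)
Your proposal is correct and matches the paper's own proof, which simply states that one uses \eqref{q-bin-GB-MS} twice in the same pattern as the proof of Theorem~\ref{th:an-m-GB-Heine7} and leaves the details to the reader. You have supplied precisely those details, including the correct handling of the extra-parameter $c$- and $d$-factors, which indeed depend only on $\k$ (resp.\ $\j$) and are produced verbatim by the two applications of \eqref{q-bin-GB-MS}.
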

\begin{proof} The proof involves the use of \eqref{q-bin-GB-MS} twice and is very similar to the proof of Theorem~\ref{th:an-m-GB-Heine7}. We leave the details to the reader.  
\end{proof}
Observe that when $n=1=m$ in \eqref{an-m-GB-Heine1}, then the parameters $c$ and $d$ disappear, and we obtain \eqref{GB-sym-Heine}. 

Finally, we note one more formula, obtained by combining the $c=0$ case of \eqref{q-bin-GB-MS} (a summation formula of Milne \cite[Th.\ 5.38]{Milne1997}) with \eqref{an-qbin2}. These $A_n$ $q$-binomial theorems were among the first ones in this theory, and therefore we felt it appropriate to end with a formula combining the two. 
\begin{Theorem}[A bibasic Heine transformation formula; $A_n \to A_{m}$]\label{th:an-m-GB-Heine2}
In addition to the usual convergence conditions, 
let $|z|<1$ and 
$|w/y_r|<1$, for $r=1, 2, \dots, m$. Then
\begin{align}
\multisum{k}{n} \Bigg( & \htvandermonde{x}{k}{n}{h} 
\sqprod{n} \frac{\pqrfac{a_s\xover{x}}{k_r}{q^h} }{\pqrfac{q^h\xover{x}}{k_r}{q^h} } \notag \\
&\times \smallprod{m} \frac{\pqrfac{w/y_r}{h\sumk}{q^t}}{\pqrfac{b_rw/y_r}{h\sumk}{q^t}}
\cdot z^{\sumk} \hpowerq{k}{n} \Bigg) \notag \\
= \smallprod{m} & \left[ \frac{\pqrfac{w/y_r}{\infty}{q^t}}{\pqrfac{b_rw/y_r}{\infty}{q^t}}\right] 
\; \frac{\pqrfac{a_1a_2\cdots a_n z}{\infty}{q^h}}{\pqrfac{z}{\infty}{q^h}}\notag \\
\times \multisum{j}{m} & \Bigg( \htvandermonde{y}{j}{m}{t}
\sqprod{m} \frac{\pqrfac{b_sy_r/y_s}{j_r}{q^t} }{\pqrfac{q^t\xover{y}}{j_r}{q^t} } \notag \\
& \times  \frac{\pqrfac{z}{t\sumj}{q^h}}{\pqrfac{a_1a_2\cdots a_nz}{t\sumj}{q^h}}
w^{\sumj} \htpowerq{j}{m}{t} 
 \htqelementary{j}{t} \smallprod{m} y_r^{-j_r}
\Bigg). \label{an-m-GB-Heine2}
\end{align}
\end{Theorem}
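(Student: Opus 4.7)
The strategy I propose is to run the proof of Theorem~\ref{th:an-m-GB-Heine7} with a single substitution: one of the two invocations of \eqref{an-qbin2} is replaced by the $c=0$ specialisation of \eqref{q-bin-GB-MS}, i.e.\ Milne's $A_n$ $q$-binomial theorem. This matches the shape of \eqref{an-m-GB-Heine2} perfectly: its $\k$-sum on the left carries neither a $q^{e_2(\k)}$ nor a $\smallprod{n} x_r^{-k_r}$ factor (fitting the simpler Milne right hand side), whereas its $\j$-sum on the right does contain the $\htqelementary{j}{t}$ and $\smallprod{m} y_r^{-j_r}$ factors (fitting the Milne--Lilly right hand side of \eqref{an-qbin2}).

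Starting from the left hand side of \eqref{an-m-GB-Heine2}, I would first use \eqref{elementary1} to split, for each $r=1,\dots,m$,
$$\frac{\pqrfac{w/y_r}{h\sumk}{q^t}}{\pqrfac{b_rw/y_r}{h\sumk}{q^t}}
= \frac{\pqrfac{w/y_r}{\infty}{q^t}}{\pqrfac{b_rw/y_r}{\infty}{q^t}}\cdot
\frac{\pqrfac{b_r(w/y_r)q^{th\sumk}}{\infty}{q^t}}{\pqrfac{(w/y_r)q^{th\sumk}}{\infty}{q^t}}.$$
The $k$-independent $\infty$-products pull out of the $\k$-sum and form the prefactor $\smallprod{m}\pqrfac{w/y_r}{\infty}{q^t}/\pqrfac{b_rw/y_r}{\infty}{q^t}$ on the right. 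I then apply \eqref{an-qbin2} with $n\mapsto m$, $x_r\mapsto y_r$, $q\mapsto q^t$, $a_s\mapsto b_s$, and $z\mapsto wq^{th\sumk}$ to expand the remaining product as an $m$-fold sum over indices $j_1,\dots,j_m$, producing the Vandermonde in $y$, the $A_m$ ratio of $\pqrfac{b_sy_r/y_s}{j_r}{q^t}$ to $\pqrfac{q^t\xover{y}}{j_r}{q^t}$, and the factors $\htpowerq{j}{m}{t}$, $\htqelementary{j}{t}$, and $\smallprod{m} y_r^{-j_r}$ that appear on the right hand side of \eqref{an-m-GB-Heine2}. The only $\k$-dependence left in the $\j$-part lives inside $(wq^{th\sumk})^{\sumj}=w^{\sumj}q^{(th\sumj)\sumk}$.

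After interchanging the $\k$- and $\j$-sums, the inner $\k$-sum combines the factor $q^{(th\sumj)\sumk}$ with $z^{\sumk}$ to give $(zq^{th\sumj})^{\sumk}$, and is then precisely the $c=0$, $q\mapsto q^h$, $z\mapsto zq^{th\sumj}$ case of \eqref{q-bin-GB-MS}. Evaluating it yields
$$\frac{\pqrfac{a_1a_2\cdots a_n z q^{th\sumj}}{\infty}{q^h}}{\pqrfac{zq^{th\sumj}}{\infty}{q^h}}.$$
A final application of \eqref{elementary1} splits this ratio as $\pqrfac{a_1a_2\cdots a_n z}{\infty}{q^h}/\pqrfac{z}{\infty}{q^h}$ times $\pqrfac{z}{t\sumj}{q^h}/\pqrfac{a_1a_2\cdots a_nz}{t\sumj}{q^h}$, which, combined with the prefactor collected above, reproduces the right hand side of \eqref{an-m-GB-Heine2}.

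The only real obstacles are bookkeeping and convergence. Bookkeeping amounts to tracking how the Vandermonde, the $\hpowerq{k}{n}$ factor, and the various $q$-factorials behave under the substitution $z\mapsto wq^{th\sumk}$ inside \eqref{an-qbin2}, and verifying that the $q$-power exponents on the $\k$-side accumulate correctly so that Milne's theorem applies cleanly. The interchange of summation is standard: the hypotheses $|z|<1$, $|w/y_r|<1$ for $r=1,\dots,m$, combined with $0<|q^h|,|q^t|,|q^{ht}|<1$, give absolute convergence of the resulting double sum by the multiple power series ratio test cited in Remark~\ref{rem:convergence}.
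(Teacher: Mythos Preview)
Your proposal is correct and follows exactly the route the paper takes: the paper's proof simply says to use the $c=0$ case of \eqref{q-bin-GB-MS} together with \eqref{an-qbin2}, in the pattern of Theorem~\ref{th:an-m-GB-Heine7}, and you have carried out precisely those steps with the right identifications (the Milne--Lilly theorem \eqref{an-qbin2} for the $\j$-expansion, Milne's $c=0$ case of \eqref{q-bin-GB-MS} for the inner $\k$-sum).
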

\begin{proof} The proof involves the use of \eqref{q-bin-GB-MS} (with $c=0$) and \eqref{an-qbin2} and is very similar to the proof of Theorem~\ref{th:an-m-GB-Heine7}.
\end{proof}

We will not provide any further $A_n$ bibasic Heine transformations. But it is perhaps useful to note the various types of products that appear on the product sides of other $q$-binomial theorems in the literature. The following are the product sides of $A_n$ $q$-binomial theorems due to Milne \cite[Th.\ 5.40]{Milne1997}, Milne \cite[Th.\ 5.42]{Milne1997}, and Kajihara \cite[Eq.\ (3.6)]{Ka2012}, respectively:
\begin{equation*}
\frac{\smallprod{n}  \qrfac{azx_r}{\infty}
}{ \qrfac{z}{\infty}},
\frac{ \qrfac{az}{\infty} }{\qrfac{z}{\infty}}, 
\text{ and }
\smallprod{n} \frac{\qrfac{a_r z /x_r}{\infty}}{\qrfac{a_rz/a_1a_2\cdots a_n x_r}{\infty}} .
\end{equation*}
For all the above, one can write down multiple series bibasic Heine transformations with two sets of factors chosen from the 6 sets given by the product sides of the different $q$-binomial theorems.
Unfortunately, the $A_n$  $q$-binomial theorem due to the author and Schlosser \cite{BS1998} is not amenable to Heine's method. 


\section{Special cases: Ramanujan's $_2\phi_1$ transformations}\label{sec:ramanujan-2p1}

In this section, we find
multiple series extensions of many of Ramanujan's $_2\phi_1$ transformation formulas. These are special cases of the $A_n$ to $A_m$ bibasic Heine transformation formulas. In the classical case, this was done in \cite{GB2017}, which, in turn, is an addendum to Andrews and Berndt \cite[Ch.\ 1]{AB2009}. The identity central to the study of Ramanujan's transformations is given by 
\cite[Eq.~(3.1)]{GB2017} (mildly rewritten) 
\begin{align}\label{GB-1.4.1}
\frac{\pqrfac{aq^t}{\infty}{q^t} \pqrfac{cq^{h+1}}{\infty}{q^h} }{\pqrfac{-bq^{t+1}}{\infty}{q^t}\pqrfac{dq^h}{\infty}{q^h}} &
\sum_{j=0}^{\infty} \frac{\pqrfac{-bq/a}{j}{q^t}} {\pqrfac{q^t}{j}{q^t} }
\frac {\pqrfac{dq^h}{tj}{q^h}}{\pqrfac{cq^{h+1}}{tj}{q^h}} (aq^t)^j \cr
=&
\sum_{k=0}^{\infty} \frac{\pqrfac{cq/d}{k}{q^h}}{\pqrfac{q^h}{k}{q^h}}
\frac{ \pqrfac{aq^t}{hk}{q^t} }{\pqrfac{-bq^{t+1}}{hk}{q^t}} (dq^h)^k .
\end{align}
Again, $h$ and $t$ are complex numbers, and we have the usual convergence conditions, namely $|q^h|<1$, $|q^t|<1$ and $|q^{ht}|<1$.  Further, for the series to converge, we require $|aq^t|<1$ and $|dq^h|<1$.  
This is equivalent to \eqref{GB-sym-Heine}, as we can see by relabelling the parameters as follows: 
$a \mapsto cq/d$,  $b\mapsto -bq/a,$ $w\mapsto aq^t,$ and $z\mapsto  dq^h.$ When $h=2$ and $t=1$, this reduces to an equivalent form of Ramanujan's Entry 1.4.1 in \cite[\S 1.4]{AB2009}. 

In the special cases we consider, many of the products appearing in the sum simplify. 
Our first set of examples is obtained as special cases of Theorem~\ref{th:an-m-GB-Heine8}.
\begin{Corollary}[An extension of \eqref{GB-1.4.1}; $A_m\to A_n$] Aside from the usual convergence conditions, let $|aq^{tm}|<1$ and $|dq^{hn}|<1$. Then
\begin{align} 
\smallprod m \frac{\pqrfac{aq^{tmr}}{\infty}{q^{tm}}}{\pqrfac{-bq^{tmr+1}}{\infty}{q^{tm}}}
&
\cdot \frac{\pqrfac{cq^{h+1}}{\infty}{q^{h}}}{\pqrfac{dq^{h}}{\infty}{q^{h}}}\notag \\
\times \multisum{j}{m} & \Bigg( \vandespecialt{j}{m}{t}
\smallprod{m} \frac{\pqrfac{-bq/a}{j_r}{q^{tm}} }{\pqrfac{q^{tm}}{j_r}{q^{tm}} } \notag \\
& \times  \frac{\pqrfac{dq^{h}}{tmn\sumj}{q^{h}}}{\pqrfac{cq^{h+1}}{tmn\sumj}{q^{h}}}
\cdot (aq^{tm})^{\sumj} q^{tm \sum\limits_{r=1}^m (r-1)j_r} 
 \Bigg) \notag \\
=  \multisum{k}{n} \Bigg( & \vandespecialt{k}{n}{h} 
\smallprod{n} \frac{\pqrfac{cq^{1+h(r-n)}/d}{nk_r}{q^h} }{\pqrfac{q^{hr}}{nk_r}{q^{h}} } \notag \\
&\times \smallprod m 
\frac{\pqrfac{aq^{tmr}}{hn\sumk}{q^{tm}}}{\pqrfac{-bq^{1+tmr}}{hn\sumk}{q^{tm}}} \cr
&\times (dq^{hn})^{\sumk} q^{h(n-1)\sum\limits_{r=1}^n (r-1)k_r} 
\htqelementary{k}{hn}
\Bigg).
\label{an-m-GB-1.4.1-2}
\end{align}
\end{Corollary}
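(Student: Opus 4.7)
The plan is to specialize Theorem~\ref{th:an-m-GB-Heine8} and then telescope the resulting $A_n$-type products. Specifically, in \eqref{an-m-GB-Heine8} I first rescale the bases via $h \mapsto hn$ and $t \mapsto tm$ (permitted since $h$ and $t$ are arbitrary complex numbers), and then make the specializations
\[
x_r = q^{h(r-1)}, \quad y_r = q^{t(r-1)}, \quad a_s = cq/d \ (s = 1,\dots,n), \quad b = -bq/a, \quad z = dq^{hn}, \quad w = aq^{tm}.
\]
Under these choices, $x_r/x_s = q^{h(r-s)}$ and $y_r/y_s = q^{t(r-s)}$, so the Vandermonde products $\htvandermonde{x}{k}{n}{hn}$ and $\htvandermonde{y}{j}{m}{tm}$ collapse directly into $\vandespecialt{k}{n}{h}$ and $\vandespecialt{j}{m}{t}$, matching the corollary.

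The main work is telescoping. For each fixed $r$, the denominator $\prod_{s=1}^n \pqrfac{q^{h(n+r-s)}}{k_r}{q^{hn}}$ consists of $nk_r$ factors $(1 - q^{h(n+r-s) + hnj})$; as $(s,j)$ ranges over $\{1,\dots,n\} \times \{0,\dots,k_r-1\}$ the exponents $h(n+r-s)+hnj$ enumerate, without repetition, the consecutive values $\{hr, h(r+1), \dots, h(r+nk_r-1)\}$, and hence the product equals $\pqrfac{q^{hr}}{nk_r}{q^h}$. Because $a_s = cq/d$ is independent of $s$, the same reasoning applied to the numerator gives
\[
\prod_{s=1}^n \pqrfac{(cq/d)\, q^{h(r-s)}}{k_r}{q^{hn}} = \pqrfac{cq^{1+h(r-n)}/d}{nk_r}{q^h},
\]
precisely the $a_s$-side factor of the corollary. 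The identical telescoping pattern---an $n$-fold product with base $q^{hn}$ collapsing to a single Pochhammer symbol with base $q^h$---also yields $\prod_{r=1}^n \pqrfac{a_r z/x_r}{\infty}{q^{hn}} = \pqrfac{cq^{h+1}}{\infty}{q^h}$ and $\prod_{r=1}^n \pqrfac{z/x_r}{\infty}{q^{hn}} = \pqrfac{dq^h}{\infty}{q^h}$, together with the finite analogue $\prod_{r=1}^n \pqrfac{z/x_r}{tm\sumj}{q^{hn}}/\pqrfac{a_r z/x_r}{tm\sumj}{q^{hn}} = \pqrfac{dq^h}{tmn\sumj}{q^h}/\pqrfac{cq^{h+1}}{tmn\sumj}{q^h}$.

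The remaining steps are routine bookkeeping: $z^{\sumk} = (dq^{hn})^{\sumk}$; the power $q^{hn\sum(r-1)k_r}$ from $\hpowerq{k}{n}$ (after the base shift) combines with $\prod_r x_r^{-k_r} = q^{-h\sum(r-1)k_r}$ to produce $q^{h(n-1)\sum(r-1)k_r}$; and $\htqelementary{k}{h}$ becomes $q^{hn\,e_2(\k)}$. On the $j$-side, $w = aq^{tm}$ turns $\pqrfac{wq^{tm(r-1)}}{\cdot}{q^{tm}}$ and $\pqrfac{bwq^{tm(r-1)}}{\cdot}{q^{tm}}$ into the desired $\pqrfac{aq^{tmr}}{\cdot}{q^{tm}}$ and $\pqrfac{-bq^{tmr+1}}{\cdot}{q^{tm}}$, both in the finite form with $hn\sumk$ factors and in the outside infinite-product factor. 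The only real obstacle is keeping the telescoping indices straight so that the base change from $q^{hn}$ to $q^h$ is valid at every occurrence; convergence of the resulting series under $|aq^{tm}|<1$ and $|dq^{hn}|<1$ then follows from the multiple power-series ratio test referenced in Remark~\ref{rem:convergence}.
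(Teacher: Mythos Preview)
Your proposal is correct and follows essentially the same route as the paper: specialize Theorem~\ref{th:an-m-GB-Heine8} with $a_r\mapsto cq/d$, $b\mapsto -bq/a$, $z\mapsto dq^{hn}$, $w\mapsto aq^{tm}$, $h\mapsto hn$, $t\mapsto tm$, $x_r=q^{h(r-1)}$, $y_r=q^{t(r-1)}$, and then telescope. The only cosmetic difference is that the paper performs the parameter substitutions before the base shift $h\mapsto hn$, $t\mapsto tm$ (so that it writes $z\mapsto dq^h$, $w\mapsto aq^t$ first), and it packages your explicit telescoping computation by invoking the identity $\qrfac{a}{nk}=\pqrfac{a,aq,\dots,aq^{n-1}}{k}{q^n}$; the content is the same.
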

\begin{proof} We first obtain an equivalent formulation of \eqref{an-m-GB-Heine8} by taking
$a_r\mapsto c_rq/d$ (for $r=1, 2, \dots, n$), $z\mapsto dq^h$, $b\mapsto -bq/a$, and 
$w\mapsto aq^t$. In this manner, we obtain an extension of \eqref{GB-1.4.1}.  In the resulting identity, we take $c_1=c_2=\cdots = c_n=c$, $h\mapsto hn$, $t\mapsto tm$, and specialize $x_r$ and $y_r$ as follows
\begin{align*}
x_r &\mapsto q^{h(r-1)} \text{ (for $r=1, 2, \dots, n$)}\cr
y_r &\mapsto q^{t(r-1)} \text{ (for $r=1, 2, \dots, m$)}.
\end{align*}
With these special cases, several products appearing in the sum simplify. We note two examples of how different factors simplify.
\begin{align*}
\sqprod{n} \frac{\pqrfac{cx_rq/dx_s}{k_r} {q^h}}{\pqrfac{q^h\xover{x}}{k_r} {q^h} }
&\longrightarrow
\smallprod{n} \frac{\pqrfac{cq^{1+h(r-n)}/d}{nk_r}{q^h} }{\pqrfac{q^{hr}}{nk_r}{q^{h}} } 
\label{sqprod-simplify}
\\
 \smallprod{n} \pqrfac{dq^h/x_r}{t\sumj}{q^h} 
 &\longrightarrow
 \pqrfac{dq^{h}}{tmn\sumj}{q^{h}}.
 \end{align*}
Both of these simplifications are obtained using the elementary identity \cite[Eq.\ (I.27)]{GR90}
\begin{equation*}
\qrfac{a}{nk} = \pqrfac{a,aq,\dots, aq^{n-1}}{k}{q^n}.
\end{equation*}
It should now be clear how \eqref{an-m-GB-1.4.1-2} is obtained. 
\end{proof}

We can have many $A_m\to A_n$ generalizations of \eqref{GB-1.4.1} on the lines of \eqref{an-m-GB-1.4.1-2}. Special cases of these results lead to multiple series generalizations of many of Ramanujan's transformation formulas from \cite[Ch.\ 1]{AB2009}. We give a small sample of possibilities below, to illustrate the kinds of formulas that can be obtained. We have kept a convention of keeping the index $j$ for the $A_m$ series, and index $k$ for the $A_n$ series, to make it easy for the reader to see how the identities are obtained from the corresponding bibasic Heine transformations. 

Next we consider the case $a\to 0$ and $c=0$ in \eqref{an-m-GB-1.4.1-2}. In the resulting identity, set $b\mapsto -a/q$ and $d\mapsto b$, and then take $h=t=1$, and $a=b=1$. In this manner we obtain the following identity, which is a generalization of Ramanujan's \cite[Entry 1.4.10]{AB2009}. For $|q|<1$:
\begin{align}
\multisum{k}{n} \Bigg( & \vandespecial{k}{n} 
\smallprod{n} \frac{1 }{\qrfac{q^r}{nk_r}}
\smallprod{m} \frac{1}{\pqrfac{q^{mr}}{n\sumk}{q^m}}
 \notag \\
&\times 
q^{n\sumk} q^{(n-1) \sum\limits_{r=1}^n (r-1)k_r} 
\htqelementary{k}{n} 
\Bigg) \notag \\
=  & 
\frac{1}{\pqrfac{q}{\infty}{q}}
\smallprod m
\frac{1}{\pqrfac{q^{mr} }{\infty}{q^m}}
\notag \\
\times
 \multisum{j}{m} & \Bigg( \vandespecial{j}{m} \cdot
\frac{\qrfac{q}{mn\sumj} } {\smallprod{m} \pqrfac{q^m}{j_r}{q^m} } 
\notag \\
\times &  
(-1)^{\sumj} 
q^{m\sum\limits_{r=1}^m (r-1)j_r}
q^{m\sum\limits_{r=1}^m {j_r+1\choose 2}}
\Bigg).
\label{an-m-andrews6-1.4.10}
\end{align}
When $m=1$, this reduces to 
\begin{align}
\multisum{k}{n} \Bigg( & \vandespecial{k}{n}  
\smallprod{n} \frac{1 }{\qrfac{q^r}{nk_r}}
\cdot
 \frac{1}{\pqrfac{q}{n\sumk}{q}}
 \notag \\
&\times 
 q^{n\sumk} q^{(n-1) \sum\limits_{r=1}^n (r-1)k_r} 
\htqelementary{k}{n} 
\Bigg) \notag \\
=  &\frac{1}{(q;q)^2_{\infty}}
\sum_{j=0}^{\infty}
\frac{\qrfac{q}{nj} }{\qrfac{q}{j} } 
(-1)^{j} q^{\frac{j(j+1)}{2}} .
\label{an-m-andrews6-1.4.10b}
\end{align}
Compare these formulas with Ramanujan's own formula \cite[Entry 1.4.10]{AB2009}, obtained when $n=m=1$:
\begin{equation}\label{Entry1.4.10}
\sum_{k\geq 0} \frac{q^k}{(q;q)^2_k} =\frac{1}{(q;q)^2_{\infty}}
\sum_{j=0}^{\infty}
(-1)^{j} q^{\frac{j(j+1)}{2}} .
\end{equation}

We can obtain extensions of Entry 1.4.10 from other bibasic Heine transformations. For the sake of comparison, we present one obtained 
from Theorem \ref{th:an-m-GB-Heine2}. 
We take $a_r\mapsto c_rq/d$ (for $r=1, 2, \dots, n$), $z\mapsto d^nq^h$, $b_r\mapsto -b_rq/a$ (for $r=1,2,\dots, m$), and 
$w\mapsto aq^t$. In this manner, we obtain an extension of \eqref{GB-1.4.1}.  In the resulting identity, we take $b_1=b_2=\cdots = b_m=b$,  $c_1=c_2=\cdots = c_n=c$, $h\mapsto hn$, $t\mapsto tm$, and specialize $x_r$ and $y_r$ as follows
\begin{align*}
x_r &\mapsto q^{h(r-1)} \text{ (for $r=1, 2, \dots, n$)}\cr
y_r &\mapsto q^{t(r-1)} \text{ (for $r=1, 2, \dots, m$)}.
\end{align*}
Next we take $b=0$ and $d\to 0$, replace $c$ by $b/q$. Finally, we take $h=t=1$ and $a=b=1$ to obtain, for $|q|<1$:
\begin{align}
\multisum{j}{m} \Bigg( & \vandespecial{j}{m} 
\smallprod{m} \frac{1 }{\qrfac{q^r}{mj_r}} \cdot
\frac{1}{\pqrfac{q^{n}}{m\sumj}{q^n}}
 \notag \\
&\times 
q^{m\sumj} q^{(m-1) \sum\limits_{r=1}^n (r-1)j_r} 
\htqelementary{j}{m} 
\Bigg) \notag \\
&=   
\frac{1}{\pqrfac{q }{\infty}{q} \pqrfac{q^n}{\infty}{q^n}}
\notag \\
\times
 \multisum{k}{n} & \Bigg( \vandespecial{k}{n} \cdot
\frac{\qrfac{q}{mn\sumk} } {\smallprod{n} \qrfac{q^r}{nk_r} } 
\notag \\
& \times  
(-1)^{n\sumk} 
q^{2n\sum\limits_{r=1}^n (r-1)k_r -n(n-1)\sumk}
q^{\sum\limits_{r=1}^n {nk_r+1\choose 2}}
\Bigg).
\label{an-m-andrews6-1.4.10-c}
\end{align}
When $n=1$, this reduces to \eqref{an-m-andrews6-1.4.10b}. But if $m=1$, we obtain 

\begin{align}
\sum_{j=0}^{\infty} &
\frac{ q^j}{\qrfac{q}{j} \pqrfac{q^n}{j}{q^n}} 
=  
\frac{1}{(q;q)_{\infty}(q^n;q^n)_{\infty}} 
\multisum{k}{n} \Bigg(  \vandespecial{k}{n} \cr 
&\times   \frac{\pqrfac{q}{n\sumk}{q} }
{\smallprod{n} \qrfac{q^r}{nk_r}}  
(-1)^{n\sumk} 
q^{2n\sum\limits_{r=1}^n (r-1)k_r -n(n-1)\sumk}
q^{\sum\limits_{r=1}^n {nk_r+1\choose 2}}
\Bigg).
\end{align}

Next we consider another fruitful limiting case of \eqref{GB-1.4.1}, where we take $a\to 0$ and $d\to 0$. This limiting case yields many identities of Ramanujan. 

%
We cannot take the limit as $d\to 0$ in \eqref{an-m-GB-1.4.1-2} unless $n=1$. So we take  $n=1$, and  $a\to 0$ and $d\to0$ and then take $c\mapsto -a/q$ and $b\mapsto b/q$. In this manner, assuming the usual convergence conditions, we obtain:
\begin{align}
\pqrfac{-aq^{h}}{\infty}{q^{h}} 
\multisum{j}{m} & \Bigg( \vandespecialt{j}{m}{t}
\smallprod{m} \frac{1}{\pqrfac{q^{tm}}{j_r}{q^{tm}} } 
\notag \\
& \times
  \frac{b^{\sumj} }{\pqrfac{-aq^{h}}{tm\sumj}{q^{h}}}
q^{tm\sum\limits_{r=1}^m (r-1)j_r}
q^{tm\sum\limits_{r=1}^m {j_r+1\choose 2}}
\Bigg) \notag \\
= 
\smallprod m \pqrfac{-bq^{tmr}}{\infty}{q^{tm}} 
&\sum_{k=0}^{\infty}
\frac{a^kq^{h{k+1\choose 2}}}{\pqrfac{q^{h}}{k}{q^{h}} 
\smallprod m \pqrfac{-bq^{tmr}}{hk}{q^{tm}}}
.
\label{an-m-GB-2016-eq26-a2}
\end{align}
Compare this identity with the identity \cite[Eq.\ (3.17)]{GB2017}:
\begin{align}\label{GB-1.4.12}
\pqrfac{-aq^h}{\infty}{q^h} 
\sum_{j=0}^{\infty} 
\frac {b^j q^{t{j+1\choose 2}} }{\pqrfac{q^t}{j}{q^t} \pqrfac{-aq^h}{tj}{q^h}} 
=
\pqrfac{-bq^t}{\infty}{q^t} \sum_{k=0}^{\infty} 
\frac{ a^k q^{h{k+1\choose 2}} }{\pqrfac{q^h}{k}{q^h} \pqrfac{-bq^t}{hk}{q^t}} .
\end{align}

We can take $h=t$ in \eqref{an-m-GB-2016-eq26-a2} and let $q\mapsto q^{1/t}$, to obtain an extension of \eqref{1.4.17}. For $|q|<1$, $|q^{t}|<1$: 
\begin{align}
\pqrfac{-aq}{\infty}{q} 
\multisum{j}{m} & \Bigg( \vandespecial{j}{m}
\smallprod{m} \frac{1}{\pqrfac{q^{m}}{j_r}{q^{m}} } 
\notag \\
& \times
  \frac{b^{\sumj} }{\pqrfac{-aq}{tm\sumj}{q}}
q^{m\sum\limits_{r=1}^m (r-1)j_r}
q^{m\sum\limits_{r=1}^m {j_r+1\choose 2}}
\Bigg) \notag \\
= 
\smallprod m \pqrfac{-bq^{mr}}{\infty}{q^{m}} 
&\sum_{k=0}^{\infty}
\frac{a^kq^{{k+1\choose 2}}}{\pqrfac{q}{k}{q} 
\smallprod m \pqrfac{-bq^{mr}}{tk}{q^{m}}}
.
\label{an-m-GB-2016-eq26-a3}
\end{align}

Next we obtain analogous results from the  $c=0=d$ case of \eqref{an-m-GB-Heine1}. First, we obtain a result analogous to \eqref{an-m-GB-1.4.1-2}. 
We take
$a_r\mapsto c_rq/d$ (for $r=1, 2, \dots, n$), $z\mapsto d^nq^h$, 
$b_r\mapsto -b_rq/a$ (for $r=1, 2, \dots, m$), and 
$w\mapsto a^mq^t$. In this manner, we obtain an extension of \eqref{GB-1.4.1}.  In the resulting identity, we take  $b_1=b_2=\cdots = b_m=b$; $c_1=c_2=\cdots = c_n=c$, $h\mapsto hn$, $t\mapsto tm$, and specialize $x_r$ and $y_r$ as follows
\begin{align*}
x_r &\mapsto q^{h(r-1)} \text{ (for $r=1, 2, \dots, n$)}\cr
y_r &\mapsto q^{t(r-1)} \text{ (for $r=1, 2, \dots, m$)}.
\end{align*}
Next, we take the limits as $a\to 0$ and $d\to 0$, replace $c$ by $-a/q$ and $b$ by $b/q$ and obtain the following identity. 
\begin{align}
\pqrfac{(-aq^h)^n}{\infty}{q^{hn}} 
\multisum{j}{m} & \Bigg( \vandespecialt{j}{m}{t}
\notag \\
& \times
\smallprod{m} \frac{1}{\pqrfac{q^{tr}}{mj_r}{q^t} } \cdot
  \frac{b^{m\sumj} }{\pqrfac{(-aq^h)^n}{tm\sumj}{q^{hn}}}
  \notag \\
& \times
q^{2tm\sum\limits_{r=1}^m (r-1)j_r -tm(m-1)\sumj}
q^{t\sum\limits_{r=1}^m {mj_r+1\choose 2}}
\Bigg) \notag \\
= 
\pqrfac{(-bq^t)^m}{\infty}{q^{tm}} \multisum{k}{n} \Bigg( & \vandespecialt{k}{n}{h} 
\cr
&\times \smallprod{n} \frac{1}{\pqrfac{q^{hr}}{nk_r}{q^{h}} }\cdot
 \frac{a^{n\sumk} }{\pqrfac{(-bq^t)^m}{hn\sumk}{q^{tm}}}
 \cr
&\times
q^{2hn\sum\limits_{r=1}^n (r-1)k_r -hn(n-1)\sumk}
q^{h\sum\limits_{r=1}^n {nk_r+1\choose 2}}
\Bigg).
\label{an-m-GB2016-eq26-b}
\end{align}
Again, we require the usual convergence conditions. Note that the summands on either side have powers of $q$ that are quadratic in the indices of summation. This ensures that we need no further conditions on $a$ and $b$ for the series to converge. 

In the classical case, i.e., when $n=m=1$, this reduces to \eqref{GB-1.4.12}. If we take $h=t$ in \eqref{an-m-GB2016-eq26-b} and replace $q$ by $q^{1/t}$, we obtain \eqref{an-m-GB-1.4.17b}, an identity we highlighted in the introduction. 
Further, take $t=1$, $a=-1$, $b=1$, and $n=m$ in 
\eqref{an-m-GB-1.4.17b} to obtain
\begin{align}
\multisum{j}{m} & \Bigg( \vandespecial{j}{m}
\smallprod{m} \frac{1}{\pqrfac{q^{r}}{mj_r}{q} } \cdot
  \frac{1}{\pqrfac{q^{m}}{m\sumj}{q^{m}}}
  \notag \\
& \times
q^{2m\sum\limits_{r=1}^m (r-1)j_r - m(m-1)\sumj}
q^{\sum\limits_{r=1}^m {mj_r+1\choose 2}}
\Bigg) \notag \\
&= 
\frac{\pqrfac{(-q)^{m}}{\infty}{q^{m}}} 
{\pqrfac{q^{m}}{\infty}{q^{m}} } \cr
&\times \multisum{k}{m} \Bigg(  \vandespecial{k}{m}
 \smallprod{m} \frac{1}{\pqrfac{q^{r}}{mk_r}{q} }\cdot
 \frac{(-1)^{m\sumk} }{\pqrfac{(-q)^{m}}{m\sumk}{q^{m}}}
 \cr
&\times
q^{2m\sum\limits_{r=1}^m (r-1)k_r -m(m-1)\sumk}
q^{\sum\limits_{r=1}^m {mk_r+1\choose 2}}
\Bigg).
\label{an-m-GB-1.4.9a}
\end{align}
Compare this with Ramanujan's \cite[Entry 1.4.9]{AB2009}
\begin{equation}\label{1.4.9}
 \sum_{j=0}^\infty \frac{q^{j+1\choose 2}}{\left(q;q\right)^2_j} 
 =\frac{\qrfac{-q}{\infty}}{\left(q;q\right)_{\infty}}
\sum_{k=0}^{\infty}
 \frac{(-1)^{k} q^{k+1\choose 2}}{\pqrfac{q}{k}{q}\pqrfac{-q}{k}{q}},
\end{equation}
where we have used $\pqrfac{q^2}{k}{q^2} =\pqrfac{q}{k}{q}\pqrfac{-q}{k}{q}$ to rephrase
\cite[Entry 1.4.9]{AB2009}. 
Consider once more \eqref{an-m-GB-1.4.17b}, with
$t=1$, $a=-1$, $b=1$, and $n=1$. In this case we obtain the following extension of 
\eqref{1.4.9}.
\begin{align}
\multisum{j}{m} & \Bigg( \vandespecial{j}{m}
\smallprod{m} \frac{1}{\pqrfac{q^{r}}{mj_r}{q} } \cdot
  \frac{1}{\pqrfac{q}{m\sumj}{q}}
  \notag \\
& \times
q^{2m\sum\limits_{r=1}^m (r-1)j_r - m(m-1)\sumj}
q^{\sum\limits_{r=1}^m {mj_r+1\choose 2}}
\Bigg) \notag \\
&= 
\frac{\pqrfac{(-q)^{m}}{\infty}{q^{m}}} 
{\pqrfac{q}{\infty}{q} }
\sum_{k=0}^{\infty}
 \frac{(-1)^{k} q^{k+1\choose 2}}{\pqrfac{q}{k}{q}\pqrfac{(-q)^m}{k}{q^m}}.
\label{an-m-GB-1.4.9b}
\end{align}

The above should serve to demonstrate the kind of multiple series extensions of Ramanujan $_2\phi_1$ transformations possible. Rewriting the bibasic Heine transformations in a format extending identity \eqref{GB-1.4.1} is the key idea to obtain such special cases. From then on, the calculations shown in \cite{GB2017} can be used to find such transformations.  As we have seen, some special cases simplify the products appearing in the identities.  Clearly, many, many generalizations of Ramanujan's identities can be obtained in this manner.

\section{A variation: Using the $q$-Euler transformation formula}\label{sec:q-euler}

In Heine's method, 
instead of the $q$-Binomial Theorem, we can use
the second iterate of Heine's transformation  (a $q$-analogue of Euler's transformation of hypergeometric functions) \cite[Eq.\ (1.4.3)]{GR90}. Let $|q|<1$, $|z|<1$ and $|abz/c|<1$. Then
\begin{equation}\label{q-euler}
\sum_{k=0}^{\infty} \frac{\pqrfac{a, b}{k}{q} }{\pqrfac{q, c}{k}{q}} z^k 
=
\frac{\pqrfac{abz/c}{\infty}{q} }{\pqrfac{z}{\infty}{q}}
\sum_{j=0}^{\infty} \frac{\qrfac{c/a, c/b}{j}} {\qrfac{q, c}{j} } \left(abz/c\right)^j .
\end{equation}
Note that when $c=b$, this reduces to the $q$-binomial theorem. Using the $q$-analogue of Euler's transformation formula, we obtain the following transformation formula of double sums.
\begin{Theorem} Aside from the usual convergence conditions (see Remark~\ref{rem:convergence}), let 
$|z|<1$, $|w|<1$,  $|abzq^{ht}/c|<1$ and $|dewq^{ht}/f|<1$. Then
\begin{align}\label{GB-bibasic-Euler}
\sum_{k\geq 0} & \frac{\pqrfac{a,b}{k}{q^h}  }{\pqrfac{q^h, c}{k}{q^h}}
\frac{ \pqrfac{w}{hk}{q^t} }{\pqrfac{dew/f}{hk}{q^t}} z^k
\sum_{\widetilde{k}\geq 0}  
\frac{\pqrfac{f/d, f/e}{\widetilde{k}}{q^t} }{\pqrfac{q^t,f}{\widetilde{k}}{q^t}
} 
\left( dew q^{htk}/f \right)^{\widetilde{k}} 
\cr
& = \frac{\pqrfac{w}{\infty}{q^t}}{\pqrfac{dew/f}{\infty}{q^t}}
\frac{\pqrfac{abz/c}{\infty}{q^h} }{\pqrfac{z}{\infty}{q^h}}
\cr
&\times
\sum_{j\geq 0}  \frac{\pqrfac{d, e}{j}{q^t}  }{\pqrfac{q^t, f}{j}{q^t}}
\frac{ \pqrfac{z}{tj}{q^h} }{\pqrfac{abz/c}{tj}{q^h}} w^j
\sum_{\widetilde{j}\geq 0}  
\frac{\pqrfac{c/a, c/b}{\widetilde{j}}{q^h} }{\pqrfac{q^h, c}{\widetilde{j}}{q^h}} 
\left( abzq^{htj}/c  \right)^{\widetilde{j}} .
\end{align}
\end{Theorem}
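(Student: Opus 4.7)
The plan is to mimic Heine's original argument, with the $q$-Euler transformation \eqref{q-euler} playing the role that the $q$-binomial theorem plays in the proof of \eqref{heine}. The key observation is that both inner sums on the two sides of \eqref{GB-bibasic-Euler} have exactly the shape of one side of \eqref{q-euler}: the inner $\widetilde{k}$-sum on the left is the right-hand side of \eqref{q-euler} (with base $q^t$, parameters $d,e,f$, and argument $wq^{htk}$), while the inner $\widetilde{j}$-sum on the right is again the right-hand side (with base $q^h$, parameters $a,b,c$, and argument $zq^{htj}$).

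The first step is to apply \eqref{q-euler} in reverse to the inner $\widetilde{k}$-sum, rewriting it as
\[
\frac{\pqrfac{wq^{htk}}{\infty}{q^t}}{\pqrfac{dewq^{htk}/f}{\infty}{q^t}}\sum_{\ell\ge 0}\frac{\pqrfac{d,e}{\ell}{q^t}}{\pqrfac{q^t,f}{\ell}{q^t}}(wq^{htk})^{\ell}.
\]
Combining the resulting infinite product with the factor $\pqrfac{w}{hk}{q^t}/\pqrfac{dew/f}{hk}{q^t}$ already present on the left, \eqref{elementary1} collapses them to the $k$-independent product $\pqrfac{w}{\infty}{q^t}/\pqrfac{dew/f}{\infty}{q^t}$.

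Next I would interchange the order of the (now two) summations, which is justified by absolute convergence under the hypotheses $|z|<1$, $|w|<1$, $|abzq^{ht}/c|<1$, $|dewq^{ht}/f|<1$ and the usual convergence conditions (cf. Remark~\ref{rem:convergence}). After the interchange, the inner sum over $k$ has the shape
\[
\sum_{k\ge 0}\frac{\pqrfac{a,b}{k}{q^h}}{\pqrfac{q^h,c}{k}{q^h}}(zq^{ht\ell})^{k},
\]
which is precisely the left-hand side of \eqref{q-euler} with base $q^h$ and argument $zq^{ht\ell}$. Applying \eqref{q-euler} in the forward direction produces the inner $\widetilde{j}$-sum of \eqref{GB-bibasic-Euler} together with the product $\pqrfac{abzq^{ht\ell}/c}{\infty}{q^h}/\pqrfac{zq^{ht\ell}}{\infty}{q^h}$. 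A final application of \eqref{elementary1} splits this product into the $\ell$-independent piece $\pqrfac{abz/c}{\infty}{q^h}/\pqrfac{z}{\infty}{q^h}$ times the shifted factor $\pqrfac{z}{t\ell}{q^h}/\pqrfac{abz/c}{t\ell}{q^h}$ which lands inside the outer sum; relabelling $\ell\mapsto j$ yields exactly the right-hand side of \eqref{GB-bibasic-Euler}.

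I expect the only genuine obstacle to be bookkeeping: the formula contains four different $q$-shifted factorials per term and two different bases, so care is needed to make sure the parameter substitutions in each invocation of \eqref{q-euler} are consistent, and to verify that the combined convergence of the double series on each side justifies the interchange of summations. Everything else is a direct transcription of Heine's original manipulations.
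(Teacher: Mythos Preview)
Your proposal is correct and follows essentially the same approach as the paper: apply \eqref{q-euler} in reverse to the inner $\widetilde{k}$-sum (base $q^t$, variables $d,e,f,w$), collapse products via \eqref{elementary1}, interchange the two remaining sums, then apply \eqref{q-euler} forward to the resulting $k$-sum (base $q^h$, variables $a,b,c,z$) and split off the infinite products. The paper's proof sketch says exactly this---``use \eqref{q-euler} with two sets of variables $(a,b,c,z)$ with base $q^h$ and $(d,e,f,w)$ with base $q^t$''---and the more detailed symbolic proof of the $A_n$ extension just below it confirms the same sequence of steps you describe.
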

\begin{proof} The proof is a straightforward extension of the proof of \eqref{heine}. We use \eqref{q-euler} with two sets of variables $(a, b, c, z)$ with base $q^h$ and 
$(d, e, f, w)$ with base $q^t$. The rest of the details are quite similar. We give more details when extending to multiple series, below. 
\end{proof}
Note that when $c=b$ and $f=e$, then \eqref{GB-bibasic-Euler} reduces to \eqref{GB-sym-Heine}.

There are many $A_n$ extensions of \eqref{q-euler}. They follow from multiple series extensions of Bailey's $_{10}\phi_9$ transformation formulas. See \cite[Th.\ 5.15]{BS1998} for an example of this calculation. Many of them can be used to generalize 
\eqref{GB-bibasic-Euler}. As an example, we use a result of Kajihara \cite[Th.\ 1.1]{Ka2004}.  We chose this particular transformation formula, because it was one of the first results which transformed an $A_n$ series into an $A_m$ series---and therefore, its existence was an important motivation of our work.  Kajihara's transformation formula can be stated as follows. For $|z|<1$:
\begin{align}
\multisum{k}{n} \Bigg( & \vandermonde{x}{k}{n} 
\sqprod{n} \frac{\pqrfac{a_s\xover{x}}{k_r}{q} }{\pqrfac{q\xover{x}}{k_r}{q} }
\cr
&\times
 \smallprod{n}\prod_{s=1}^m 
\frac{\pqrfac{b_s{x_r}{y_s}}{k_r}{q}}
{\pqrfac{c{x_r}{y_s}}{k_r}{q}}z^{\sumk} \powerq{k}{n} 
\Bigg)
\notag\\
&= 
\frac{\pqrfac{a_1\cdots a_n b_1\cdots b_m z/c^m}{\infty}{q}}{\pqrfac{z}{\infty}{q}}\notag \\
\times \multisum{j}{m} \Bigg( & \vandermonde{y}{j}{m} 
\sqprod{m} \frac{\pqrfac{{cy_r}/{b_sy_s}}{j_r}{q} }{\pqrfac{q\xover{y}}{j_r}{q} }
 \smallprod{m}\prod_{s=1}^n 
\frac{\pqrfac{cx_sy_r/a_s}{j_r}{q}}
{\pqrfac{c{x_s}{y_r}}{j_r}{q}} 
 \notag \\
&\times 
 \left( {a_1\cdots a_n b_1\cdots b_m z}/{c^m}\right)^{\sumj} 
\powerq{j}{m}
\Bigg). \label{an-m-kajihara}
\end{align}

Using Heine's method on this transformation formula, we obtain the following transformation formula of double multiple-sums.
\begin{Theorem}[An extension of \eqref{GB-bibasic-Euler}; 
$A_{n}\textnormal{-}A_\nu\to A_{m}\textnormal{-}A_\mu$] 
Let 
$|z|<1$, $|w|<1$,
$ \left| a_1\cdots a_n b_1\cdots b_\mu zq^{th}/ c^\mu\right|<1$, and
  $\left| d_1\cdots d_m e_1\cdots e_\nu wq^{th}/f^\nu\right|<1$. In addition, suppose the usual convergence conditions apply. 
  Then, we have
\begin{align}
\multisum{k}{n} \Bigg( & \hvandermonde{x}{k}{n} 
\sqprod{n} \frac{\pqrfac{a_s\xover{x}}{k_r}{q^h} }{\pqrfac{q^h\xover{x}}{k_r}{q^h} } 
\smallprod{n}\prod_{s=1}^\mu 
\frac{\pqrfac{b_sx_rX_s}{k_r}{q^h}}
{\pqrfac{cx_rX_s}{k_r}{q^h}}
\notag \\
&\times 
\frac{\pqrfac{w}{h\sumk}{q^t}}
{\pqrfac{d_1\cdots d_m e_1\cdots e_\nu w/{f^\nu}}{h\sumk }{q^t}} 
\cdot z^{\sumk} \hpowerq{k}{n} 
\notag\\
\times \multisum{\widetilde{k}}{\nu} \Bigg( & \htvandermonde{Y}{\widetilde{k}}{\nu}{t} 
\sqprod{\nu} \frac{\pqrfac{fY_r/e_s Y_s}{\widetilde{k}_r}{q^t} }
{\pqrfac{q^tY_r/Y_s}{\widetilde{k}_r}{q^t} } 
\smallprod{\nu}\prod_{s=1}^m 
\frac{\pqrfac{fy_sY_r/d_s}{\widetilde{k}_r}{q^t}}
{\pqrfac{fy_sY_r}{\widetilde{k}_r}{q^t}} \notag\\
&\times \left( d_1\cdots d_m e_1\cdots e_\nu wq^{th\sumk}/f^\nu\right)^{\sumkcurl}  \htpowerq{\widetilde{k}}{\nu}{t}
 \Bigg)\Bigg) \notag \\
= & \frac{\pqrfac{w}{\infty}{q^t}\pqrfac{a_1\cdots a_n b_1\cdots b_\mu z/c^\mu}{\infty}{q^h}}
{\pqrfac{d_1\cdots d_m e_1\cdots e_\nu w/f^\nu}{\infty}{q^t} \pqrfac{z}{\infty}{q^h}}
\notag \\
\times\multisum{j}{m} \Bigg( & \htvandermonde{y}{j}{m}{t} 
\sqprod{m} \frac{\pqrfac{d_s\xover{y}}{j_r}{q^t} }{\pqrfac{q^t\xover{y}}{j_r}{q^t} } 
\smallprod{m}\prod_{s=1}^\nu 
\frac{\pqrfac{e_sy_rY_s}{j_r}{q^t}}
{\pqrfac{fy_rY_s}{j_r}{q^t}}
\notag \\
&\times 
 \frac{\pqrfac{z}{t\sumj}{q^h}}
 {\pqrfac{a_1\cdots a_n b_1\cdots b_\mu z/{c^\mu}}{t\sumj}{q^h}} 
 \cdot w^{\sumj} \htpowerq{j}{m}{t}
  \notag\\
\times \multisum{\widetilde{j}}{\mu} \Bigg( & \hvandermonde{X}{\widetilde{j}}{\mu} 
\sqprod{\mu} \frac{\pqrfac{cX_r/b_sX_s}{\widetilde{j}_r}{q^h} }
{\pqrfac{q^h\xover{X}}{\widetilde{j}_r}{q^h} }
\smallprod{\mu}\prod_{s=1}^n 
\frac{\pqrfac{cx_sX_r/a_s}{\widetilde{j}_r}{q^h}}
{\pqrfac{cx_sX_r}{\widetilde{j}_r}{q^h}} 
 \notag \\
&\times 
\left( a_1\cdots a_n b_1\cdots b_\mu zq^{th\sumj }/ c^\mu\right)^{\sumjcurl} \hpowerq{\widetilde{j}}{\mu}
\Bigg)\Bigg). \label{an-m-GB-kajihara1 }
\end{align}
\end{Theorem}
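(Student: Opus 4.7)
The plan is to apply Kajihara's $A_n$-to-$A_m$ transformation \eqref{an-m-kajihara} twice, exactly mirroring the proof of the classical bibasic identity \eqref{GB-bibasic-Euler}. In that proof the $q$-Euler transformation \eqref{q-euler} is used on the inner $\widetilde{k}$-sum to convert RHS-form back to LHS-form (picking up an infinite-product factor), the two single sums are interchanged, and then \eqref{q-euler} is applied again in the forward direction on the outer sum. Kajihara's identity has precisely the same ``sum $=$ product $\times$ sum'' shape as $q$-Euler, so the same template applies here.

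First I would observe that the inner $\widetilde{\mathbf k}$-sum on the LHS of \eqref{an-m-GB-kajihara1} coincides with the RHS of \eqref{an-m-kajihara} under the substitutions $q\mapsto q^t$, dimensions $n\mapsto m$ and $m\mapsto\nu$, variables $(x_s,y_r)\mapsto(y_s,Y_r)$, parameters $(a_s,b_s,c)\mapsto(d_s,e_s,f)$, and $z\mapsto wq^{th\sumk}$. Inverting \eqref{an-m-kajihara} therefore rewrites this inner sum as the factor $\pqrfac{wq^{th\sumk}}{\infty}{q^t}/\pqrfac{d_1\cdots d_m e_1\cdots e_\nu wq^{th\sumk}/f^\nu}{\infty}{q^t}$ times a new $m$-fold sum in Kajihara-LHS form (indexed by $\mathbf j$, base $q^t$, $z$-value $wq^{th\sumk}$). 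Using \eqref{elementary1}, this factor telescopes with the existing $\pqrfac{w}{h\sumk}{q^t}/\pqrfac{d_1\cdots d_m e_1\cdots e_\nu w/f^\nu}{h\sumk}{q^t}$ on the LHS: the $\sumk$-dependent infinite products cancel, leaving only the $\sumk$-free ratio that appears as the first infinite-product factor on the right-hand side of the theorem.

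Next I would interchange the $\mathbf k$- and $\mathbf j$-summations, which is legal by the usual multiple power-series ratio test under the stated convergence hypotheses. The outer $\mathbf k$-sum is then in Kajihara-LHS form with base $q^h$, dimension $n$, parameters $(a_s,b_s,c)$, variables $(x_r,X_s)$, and $z$-value $zq^{th\sumj}$ (the cross factor $q^{th\sumk\,\sumj}$ having been absorbed into $z^{\sumk}$). A second application of \eqref{an-m-kajihara}, now in the forward direction, expresses this $\mathbf k$-sum as the factor $\pqrfac{a_1\cdots a_n b_1\cdots b_\mu zq^{th\sumj}/c^\mu}{\infty}{q^h}/\pqrfac{zq^{th\sumj}}{\infty}{q^h}$ times a $\mu$-fold Kajihara-RHS-form sum indexed by $\widetilde{\mathbf j}$. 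Splitting this new ratio via \eqref{elementary1} separates out the $\sumj$-free infinite product appearing on the RHS of the theorem and leaves behind the finite factor $\pqrfac{z}{t\sumj}{q^h}/\pqrfac{a_1\cdots a_n b_1\cdots b_\mu z/c^\mu}{t\sumj}{q^h}$ that sits inside the $\mathbf j$-sum.

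At this point every factor is accounted for and the identity follows by direct comparison. The main obstacle is purely notational: each application of Kajihara's identity requires relabelling four sets of variables, two bases, and several parameters consistently with the complicated doubly-indexed product structure $\prod_{r}\prod_{s}(\cdots)$, and the telescoping through \eqref{elementary1} must be carried out twice. There is no new analytic input beyond \eqref{an-m-kajihara} and the absolute convergence needed to interchange the double sum.
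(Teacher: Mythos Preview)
Your proposal is correct and follows essentially the same route as the paper: apply Kajihara's transformation \eqref{an-m-kajihara} in reverse to the inner $\widetilde{\mathbf k}$-sum (with base $q^t$ and parameters $d,e,f$), telescope via \eqref{elementary1} to peel off the $\sumk$-free infinite product, interchange the $\mathbf k$- and $\mathbf j$-sums, then apply \eqref{an-m-kajihara} in the forward direction to the $\mathbf k$-sum (base $q^h$, parameters $a,b,c$) and telescope once more. The paper presents the same four steps symbolically as $P_2\sum R_2 \to \sum L_2 \to \sum L_1 \to P_1\sum R_1$; your version merely organizes the product bookkeeping slightly differently but is otherwise identical.
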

\begin{proof}
The proof is similar to the proof to that of Theorem~\ref{th:an-m-GB-Heine7}. We indicate the steps symbolically. First represent \eqref{an-m-kajihara} symbolically as:
$$\sum_{k} L(a,b,c; x; z; q; n; k) = P(a,b,c; z; q) \sum_{j}R(a,b,c; y; z; q; m; j).$$
Of course, $m$ shows up on the left hand side too, and $n$ appears on the right hand side, but we suppress it  for clarity. (Similarly, we suppress some other symbols too.) The symbols $L, P, R$ are code for Left, Products, Right, respectively.  We will use two copies of this result. Let us represent the summands of the corresponding left hand sides by $L_1$ and $L_2$, and similarly have corresponding $P_1, P_2, R_1, R_2$. We have
\begin{align*}
L_1 = L(a,b,c; x; zq^{ht\sumj}; q^h; n; k) & \text{ and } R_1 = R(a,b,c; X; zq^{ht\sumj}; q^h; \mu; \widetilde{j}); \cr
L_2 = L(d,e,f; y; wq^{ht\sumk}; q^t; m; j) & \text{ and } 
R_2 = R(d,e,f; Y; wq^{ht\sumk}; q^t; \nu; \widetilde{k}).
\end{align*}
Let $P_1$ and $P_2$ be the corresponding products. 

The first copy of  \eqref{an-m-kajihara}  is a $A_n\to A_\mu$ transformation, where we use the indices of summation
$k$ and $\widetilde{j}$, and the second copy is a $A_m\to A_\nu$ transformation, with indices
of summation
$j$ and $\widetilde{k}$.

We now see that the left hand side can be written as
\begin{align*}
 \frac{\pqrfac{w}{\infty}{q^t}}{\pqrfac{d_1\cdots d_m e_1\cdots e_\nu w/f^\nu}{\infty}{q^t}} 
 & \sum_k (\cdots) \times P_2 \sum_{\widetilde{k}} R_2 \cr
 &= 
\frac{\pqrfac{w}{\infty}{q^t}}{\pqrfac{d_1\cdots d_m e_1\cdots e_\nu w/f^\nu}{\infty}{q^t}} 
 \sum_k (\cdots) \times \sum_{j} L_2 \cr
 &=  
 \frac{\pqrfac{w}{\infty}{q^t}}{\pqrfac{d_1\cdots d_m e_1\cdots e_\nu w/f^\nu}{\infty}{q^t}} 
 \sum_j (\cdots) \times \sum_{k} L_1 \cr
 &=
 \frac{\pqrfac{w}{\infty}{q^t}}{\pqrfac{d_1\cdots d_m e_1\cdots e_\nu w/f^\nu}{\infty}{q^t}} 
 \sum_j (\cdots) \times P_1 \sum_{\widetilde{j}} R_1 \cr
=
 \frac{\pqrfac{w}{\infty}{q^t}}{\pqrfac{d_1\cdots d_m e_1\cdots e_\nu w/f^\nu}{\infty}{q^t}} 
& \frac{\pqrfac{a_1\cdots a_n b_1\cdots b_\mu z/c^\mu}{\infty}{q^h}}{\pqrfac{z}{\infty}{q^h}}
 \sum_j (\cdots) \sum_{\widetilde{j}} R_1.
\end{align*}
We recognize the right hand side at the last step. This completes  the proof.
 \end{proof}

\section{Iterating Heine's method: Andrews' $q$-Lauricella transformation}\label{sec:qlauricella}
We now present another variation, where Heine's method is iterated, to  obtain a transformation formula that transforms a multiple sum to a single sum. This was first done by Andrews~\cite[Eq.~(4.1)]{An1972} when he obtained his transformation formula for the $q$-Lauricella function. Andrews' formula was generalized to a multibasic formula by Agarwal, Jain and Choi \cite{AJC2017}, and independently,  the author \cite{GB2017}, which is what we extend to $A_n$ series in this section. More precisely, we will indicate how such generalizations can be achieved, and show a couple of  samples. A result of this type transforms multiple multiple-sums, into a single multiple sum. 

We use the symbol for the dot product
$$ {{\h}\cdot{\k}} = h_1k_1+h_2k_2+\cdots + h_pk_p.$$
With this notation, we have the formula given by \cite[Th.\ 2.2]{AJC2017} and \cite[Eq.\ (5.1)]{GB2017}:
\begin{align}\label{GB-qlauricella}
\multisum{k}{p} \ &
\smallprod{p} \frac{\pqrfac{a_r}{k_r}{q^{h_r}}}{\pqrfac{q^{h_r}}{k_r}{q^{h_r}}}
\frac{\pqrfac{w}{ {\h}\cdot{\k}}{q^t}}
{\pqrfac{bw}{{\h}\cdot{\k}}{q^t}}
 \smallprod{p} z_r^{k_r} \cr
& =
\frac{\pqrfac{w}{\infty}{q^t}}{\pqrfac{bw}{\infty}{q^t}}
\smallprod{p} \frac{\pqrfac{a_rz_r}{\infty}{q^{h_r}}}{\pqrfac{z_r}{\infty}{q^{h_r}}}
\sum_{j=0}^{\infty} \frac{\pqrfac{b}{j}{q^t}} {\pqrfac{q^t}{j} {q^t}}
\smallprod{p} \frac{\pqrfac{z_r}{tj}{q^{h_r}}}{\pqrfac{a_rz_r}{tj}{q^{h_r}}}
w^j.
\end{align}
The relevant convergence conditions are direct extensions of the conditions in \eqref{GB-sym-Heine} to which the identity reduces when $p=1$. When $a_r\mapsto b_r$, $b\mapsto c/a$, $w\mapsto a$, and  $h_1=h_2=\cdots h_p=1=t$, then \eqref{GB-qlauricella} reduces to a transformation of Andrews for $q$-Lauricella functions 
\cite[Eq.~(4.1)]{An1972}. 

Note that the left hand side of \eqref{GB-qlauricella} is a $p$-fold sum and the right hand side is a single sum. The identity has $p+1$ bases, namely $q^t$ and $q^{h_1}, \dots, q^{h_p}$. Note further that if $p=1$, then it reduces to \eqref{GB-sym-Heine}.

Before proceeding with describing its $A_n$ extension, we recall the key idea of the proof from \cite{GB2017}. The idea is to iterate Heine's method $p$ times. 
The first step of the proof is as follows.  
\begin{align*}
\multisum{k}{p} &
\smallprod{p} \frac{\pqrfac{a_r}{k_r}{q^{h_r}}}{\pqrfac{q^{h_r}}{k_r}{q^{h_r}}} 
\frac{\pqrfac{w}{ {\h}\cdot{\k}}{q^t}}
{\pqrfac{bw}{{\h}\cdot{\k}}{q^t}}
 \smallprod{p} z_r^{k_r} \cr
&=
\frac{\pqrfac{w}{\infty}{q^t}}{\pqrfac{bw}{\infty}{q^t}}
\multisum{k}{p}  
\smallprod{p} \frac{\pqrfac{a_r}{k_r}{q^{h_r}}}{\pqrfac{q^{h_r}}{k_r}{q^{h_r}}}
\frac{\pqrfac{bwq^{t\left(h_1k_1+\cdots+h_pk_p \right)}}{ \infty}{q^t}}
{\pqrfac{wq^{t\left(h_1k_1+\cdots+h_pk_p \right)}}{\infty}{q^t}}
 \smallprod{p} z_r^{k_r} \cr
\cr
& =
\frac{\pqrfac{w}{\infty}{q^t}}{\pqrfac{bw}{\infty}{q^t}}
\multisum{k}{p} 
\smallprod{p} \frac{\pqrfac{a_r}{k_r}{q^{h_r}}}{\pqrfac{q^{h_r}}{k_r}{q^{h_r}}}
\smallprod{p} z_r^{k_r} 
\sum_{j=0}^{\infty} \frac{\pqrfac{b}{j}{q^t}} {\pqrfac{q^t}{j} {q^t}}
w^{j}q^{tj\left(h_1k_1+\cdots+h_pk_p \right)}
  \cr
 &=
\frac{\pqrfac{w}{\infty}{q^t}}{\pqrfac{bw}{\infty}{q^t}}
\multisum{k}{p-1} 
\smallprod{p-1} \frac{\pqrfac{a_r}{k_r}{q^{h_r}}}{\pqrfac{q^{h_r}}{k_r}{q^{h_r}}}
\smallprod{p-1} z_r^{k_r} \cr
&\times \sum_{j=0}^{\infty} \frac{\pqrfac{b}{j}{q^t}} {\pqrfac{q^t}{j} {q^t}} 
w^{j}q^{tj\left(h_1k_1+\cdots+h_{p-1}k_{p-1} \right)} 
 \sum_{k_p\geq 0} 
\frac{\pqrfac{a_p}{k_p}{q^{h_p}}}{\pqrfac{q^{h_p}}{k_p}{q^{h_p}}}
\left(z_pq^{tjh_p}\right)^{k_p}
  \cr
&=
\frac{\pqrfac{w}{\infty}{q^t}}{\pqrfac{bw}{\infty}{q^t}}
 \multisum{k}{p-1} 
\smallprod{p-1} \frac{\pqrfac{a_r}{k_r}{q^{h_r}}}{\pqrfac{q^{h_r}}{k_r}{q^{h_r}}}
\smallprod{p-1} z_r^{k_r} \cr
\times \sum_{j=0}^{\infty} & \frac{\pqrfac{b}{j}{q^t}} {\pqrfac{q^t}{j} {q^t}}
w^{j}q^{tj\left(h_1k_1+\cdots+h_{p-1}k_{p-1} \right)} 
\frac{\pqrfac{a_pz_pq^{tjh_p}}{\infty}{q^{h_p}}}{\pqrfac{z_pq^{tjh_p}}{\infty}{q^{h_p}}}
  \cr
&=
\frac{\pqrfac{w}{\infty}{q^t}}{\pqrfac{bw}{\infty}{q^t}} 
\frac{\pqrfac{a_pz_p}{\infty}{q^{h_p}}}{\pqrfac{z_p}{\infty}{q^{h_p}}} 
\multisum{k}{p-1} 
\smallprod{p-1} \frac{\pqrfac{a_r}{k_r}{q^{h_r}}}{\pqrfac{q^{h_r}}{k_r}{q^{h_r}}}
\smallprod{p-1} z_r^{k_r} \cr
&\times 
\sum_{j=0}^{\infty} \frac{\pqrfac{b}{j}{q^t}} {\pqrfac{q^t}{j} {q^t}}
\frac{\pqrfac{z_p}{tj}{q^{h_p}}}{\pqrfac{a_pz_p}{tj}{q^{h_p}}}
w^{j}q^{tj\left(h_1k_1+\cdots+h_{p-1}k_{p-1} \right)} .
\end{align*}
A careful look at this first step indicates how a multivariable generalization will go. To describe our generalization, we work symbolically. First, represent a (generic) $q$-binomial theorem as
$$\sum_{\k \geq 0} S(\x; n; a, z; q, \k) =P(\x; n; a, z; q),$$
where $\x =(x_1, \dots, x_n)$ and $\k\geq 0$ stands for $k_r\geq 0$, for $r=1, 2, \dots, n$.  
This is used to represent any of the  $q$-binomial theorems, such as \eqref{an-qbin2}, \eqref{an-qbin3}, or indeed any $q$-binomial theorem we have alluded to earlier (or indeed if such a theorem exists). (The symbol $a$ can represent $a_1, \dots, a_n$ as demanded by the context.)

To make our remarks more precise, we isolate the property required for Heine's method to work. Define a {\em Property $H$} as follows. A $q$-binomial theorem of the form $\sum_{\k \geq 0} S(\x; n; a, z; q, \k) =P(\x; a, z; q)$ is said to satisfy  Property $H$ if 
$$S(\x; n; a, zH; q, \k)= H^\sumk S(\x; n; a, z; q, \k).$$

Presumably, our result will have $p$ multi-sums corresponding to the left hand side of \eqref{GB-qlauricella}. So we use the symbols and conventions
\begin{align*}
&\   n_1, n_2, \dots, n_p ;\cr
\x_r &= (x_{r1}, x_{r2}, \dots, x_{rn_r}) \text{ for $r=1, 2, \dots, p$} ;\cr
&\  (\k_1, \k_2, \dots, \k_p) ;\cr
\k_r & = ( k_{r1}, k_{r2}, \dots, k_{rn_r}) \text{ for $r=1, 2, \dots, p$}.
\end{align*}
The $p$ sums will have dimensions $n_1, \dots, n_p$, and we will use the index of summations
$\k_r$ in the $r$th sum. 
We need $p$ copies of $S$ and $P$. Let us say that
$$S_r = S(\x_r; n_r; a_r, z_r; q^{h_r}; \k_r) \text{ and }
P_r(z_r; q^{h_r}) = P(\x_r; n_r; a_r, z_r; q^{h_r}).$$ 
In addition to the above $q$-binomial theorems represented by $(S_r, P_r)$, we require one that we represent as 
$$S_{\text{\it base}} = S(\y; m; b, w; q^{t}; \j) \text{ and } 
P_{\text{\it base}}(w; q^t) =  P(\y; m; b, w; q^{t}).$$
Here the subscript  represents a \lq base' $q$-binomial theorem which we choose from any one of the given options.  There is one more symbol which we will use to represent calculations of the type
$$\frac{1}{\pqrfac{z}{tk}{q^h}}= \frac{\pqrfac{zq^{thk}}{\infty}{q^h}}{\pqrfac{z}{\infty}{q^h}}.$$
We will use
$$Q(z; q^h; k) = P(zq^{hk}; q^h)/P(z; q^h).$$ 
With these symbols, we can now state our generalization of \eqref{GB-qlauricella}.
\begin{Theorem}[A Master theorem] \label{th:an-p-m-GB-qlauricella}
Let $(S_r, P_r)$ for $r=1, \dots, p$, and $(S_{\text{\it base}} , P_{\text{\it base}} )$ each represent any multiple series $q$-binomial theorem satisfying Property $H$, and let the $Q$'s be as above. Then, under suitable convergence conditions, we have
\begin{align}\label{an-p-m-GB-qlauricella}
\multisum{\k}{p} 
{\smallprod{p} S_r } \cdot &{Q_{\text{base}}(w; q^t; h_1 |\k_1|+\cdots+h_p|\k_p|)} \cr
   &= 
\frac{\smallprod{p} P_r(z_r;q^{h_r})}{P_{\text{base}}(w;q^t)}
\sum_{\j \geq 0} S_{\text{base}}
\smallprod{p} {Q_r(z_r; q^{h_r}; t\sumj)}.
\end{align}
\end{Theorem}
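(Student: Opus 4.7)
The plan is to follow the $p$-fold iteration of Heine's method that was sketched for the case $p=1$ in the derivation preceding \eqref{GB-qlauricella}. The only new feature is that the role of the single $q$-binomial theorem on the left has been replaced by $p$ (possibly different) multiple series $q$-binomial theorems, one for each group of summation indices $\k_r$, and that all of these, together with the base theorem, are merely required to satisfy Property $H$.

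I would first rewrite the left-hand side factor as
\begin{equation*}
Q_{\text{base}}(w; q^t; h_1|\k_1|+\cdots+h_p|\k_p|) = \frac{P_{\text{base}}\!\left(wq^{t(h_1|\k_1|+\cdots+h_p|\k_p|)}; q^t\right)}{P_{\text{base}}(w; q^t)},
\end{equation*}
pull the constant $1/P_{\text{base}}(w; q^t)$ outside the $\k_r$ sums, and expand the numerator using the base $q$-binomial theorem:
\begin{equation*}
P_{\text{base}}\!\left(wq^{t(h_1|\k_1|+\cdots+h_p|\k_p|)}; q^t\right) = \sum_{\j \geq 0} S\!\left(\y; m; b, wq^{t(h_1|\k_1|+\cdots+h_p|\k_p|)}; q^t; \j\right).
\end{equation*}
Property $H$ applied to $(S_{\text{base}}, P_{\text{base}})$ with $H=q^{t(h_1|\k_1|+\cdots+h_p|\k_p|)}$ converts each summand on the right into $S_{\text{base}}\cdot q^{t|\j|(h_1|\k_1|+\cdots+h_p|\k_p|)}$, and the resulting $q$-power factors as $\prod_{r=1}^{p} q^{th_r|\j|\,|\k_r|}$.

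Next I would interchange the $\j$ sum with the $p$ groups of $\k_r$ sums, after which the inner summations over the $\k_r$ decouple. For each fixed $r$, applying Property $H$ in the opposite direction to $(S_r, P_r)$ with $H=q^{th_r|\j|}$ rewrites
\begin{equation*}
q^{th_r|\j|\,|\k_r|}\,S_r = S(\x_r; n_r; a_r, z_r q^{th_r|\j|}; q^{h_r}; \k_r),
\end{equation*}
so the $r$-th inner sum collapses, by the $r$-th $q$-binomial theorem, to $P_r(z_r q^{th_r|\j|}; q^{h_r})$. Writing this as $P_r(z_r; q^{h_r})\cdot Q_r(z_r; q^{h_r}; t|\j|)$ by the definition of $Q_r$, and extracting the constant product $\prod_{r=1}^p P_r(z_r; q^{h_r})$ outside the $\j$ sum, produces exactly the right-hand side of \eqref{an-p-m-GB-qlauricella}.

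The main obstacle is not the algebra, which is a mechanical $p$-fold iteration of the first step of the proof of \eqref{GB-qlauricella} recorded in the excerpt, but rather the verification that the interchange of summations is legitimate. The ``suitable convergence conditions'' must simultaneously guarantee absolute convergence of the base $\j$-series with the perturbed argument $wq^{t(h_1|\k_1|+\cdots+h_p|\k_p|)}$, absolute convergence of each of the $p$ inner $\k_r$-series with the perturbed argument $z_r q^{th_r|\j|}$, and the existence of the infinite products $P_r(z_r; q^{h_r})$ and $P_{\text{base}}(w; q^t)$. Once these conditions are in place, no further summation identities beyond the given $p+1$ $q$-binomial theorems are required.
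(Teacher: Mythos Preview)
Your proposal is correct and matches the paper's approach exactly: the paper's own proof simply states that it is ``a straightforward iteration of Heine's method'' and leaves the details to the reader, and the computation displayed before the theorem statement is precisely the first step of the iteration you carry out. The only cosmetic difference is that the paper's sketch peels off the $\k_r$ sums one at a time (reducing $p$ to $p-1$) whereas you interchange all sums at once and then evaluate the decoupled $\k_r$ sums in parallel; these are equivalent, and your discussion of the convergence requirements for the interchange is appropriate.
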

\begin{proof} The proof is a straightforward iteration of Heine's method, and is left to the reader.  
\end{proof}
\begin{Remark*} Recall the results of \S \ref{sec:q-euler}. We see that the summands on both sides of \eqref{an-m-kajihara} satisfy Property $H$. So we can intermix $q$-binomial theorems with transformation formulas too. 
\end{Remark*}
Theorem \ref{th:an-p-m-GB-qlauricella} is stated in very general terms and is a master theorem for results in this paper. The results of \S \ref{sec:GB-Heine} are contained in the $p=1$ case of this result. We illustrate the statement by writing down two examples explicitly.

First we take $p=2$, and combine $q$-binomial theorems appearing in the work of Milne and Lilly, Gustafson and Krattenthaler, and one implicit in the author's work with Schlosser. 

We take \eqref{q-bin-GB-MS} as our base $(S_{\text{\it base}}, P_{\text{\it base}}(w; q^t))$. Next let 
$(S_1, P_1)$ from \eqref{an-qbin2}, and $(S_2, P_2)$  from \eqref{an-qbin3}. Now Theorem
\ref{th:an-p-m-GB-qlauricella} reduces to:
\begin{align}
\underset{r=1, \dots, n_1}{\sum\limits_{k_{1r}\geq 0}} \;
\underset{r=1, \dots, n_2}{\sum\limits_{k_{2r}\geq 0}}
\Bigg( & 
\triprod{n_1} \!
\frac{1-q^{h_1(k_{1r}-k_{1s})}x_{1r}/x_{1s} }{1-x_{1r}/x_{1s} }
\triprod{n_2} \!
\frac{1-q^{h_2(k_{2r}-k_{2s})}x_{2r}/x_{2s} }{1-x_{2r}/x_{2s} }\cr
&\times
\sqprod{n_1} \frac{\pqrfac{a_{1s} x_{1r}/x_{1s}}{k_{1r}}{q^{h_1}}}
 {\pqrfac{q^{h_1}x_{1r}/x_{1s}}{k_{1r}}{q^{h_1}}}
\smallprod{n_2}  \frac{\pqrfac{a_{2} }{k_{2r}}{q^{h_2}}}
{\pqrfac{q^{h_2}}{k_{2r}}{q^{h_2}} } \cr
&\times
\frac{\pqrfac{w}{h_1|\k_1|+h_2|\k_2|}{q^t}}
{\pqrfac{b_1\cdots b_mw}{h_1|\k_1|+h_2|\k_2|}{q^t}}
z_1^{|\k_1|}z_2^{|\k_2|} 
\notag \\
&\times 
q^{h_1\suml_{r=1}^{n_1} (r-1)k_{1r} +h_2\suml_{r=1}^{n_2} (r-1)k_{2r} }
  \htqelementary{k_1}{h_1} \smallprod{n} x_{1r}^{-k_{1r}}
\Bigg) \notag \\
= \smallprod{n_1} & \frac{\pqrfac{a_{1r}z_1/x_{1r}}{\infty}{q^{h_1}}}
{\pqrfac{z_1/x_{1r}}{\infty}{q^{h_1}}} 
\smallprod{n_2}
\frac{\pqrfac{a_2z_2q^{h_2(r-1)}}{\infty}{q^{h_2}}}{\pqrfac{z_2q^{h_2(r-1)}}{\infty}{q^{h_2}}} 
\cdot
\frac{\pqrfac{w}{\infty}{q^t}}{\pqrfac{b_1\cdots b_m w }{\infty}{q^t}}
 \notag \\
\times \multisum{j}{m} & \Bigg( \htvandermonde{y}{j}{m}{t}
\sqprod{m} \frac{\pqrfac{b_sy_r/y_s }{j_r}{q^t}}{\pqrfac{q^t\xover{y}}{j_r}{q^t} } 
\cr
&\times \smallprod m 
\frac{\pqrfac{cy_r/b_1\cdots b_m}{j_r}{q^t} \pqrfac{cy_r}{\sumj}{q^t} }
{\pqrfac{cy_r}{j_r}{q^t} \pqrfac{cy_r/b_r}{\sumj}{q^t}} 
\cdot w^{\sumj} \htpowerq{j}{m}{t} 
\notag \\
& \times  
 \smallprod{n_1} \frac{\pqrfac{z_1/x_{1r}}{t\sumj}{q^{h_1}}}
 {\pqrfac{a_{1r}z_1/x_{1r}}{t\sumj}{q^{h_1}}}
 \cdot
  \smallprod{n_2} \frac{\pqrfac{z_2q^{h_2(r-1)}}{t\sumj}{q^{h_2}}}
  {\pqrfac{a_2z_2q^{h_2(r-1)}}{t\sumj}{q^{h_2}}}
\Bigg). \label{an-m-GB-qlauricella-big}
\end{align}
Here, for convergence, we require $|q^{h_r}|<1,$ $|q^{t}|<1,$ $|q^{th_r}|<1$ for $r=1, 2$; $|w|<1$, and  $|z|<1$.   

In our second example of an extension of \eqref{GB-qlauricella}, we take  $p\mapsto p+1$, 
$(S_r, P_r)$ (for $r=1, 2, \dots, p$)
 to all be the $1$-variable $q$-binomial theorem \eqref{q-bin}. We take $h_1=h_2=\cdots =h_p=h$. For $(S_{p+1}, P_{p+1})$ and the base $(S_{\text{\it base}}, P_{\text{\it base}}(w; q^t))$, we take the $c=0=d$ case of \eqref{q-bin-GB-MS} with bases $q^h$ and $q^t$, respectively. In this manner we obtain, assuming the usual convergence conditions, $|z|<1$, $|w|<1$, and $|u_r|<1$ (for $r=1, 2, \dots, p$):
\begin{align}
\multisum{l}{p}\; \multisum{k}{n} \Bigg( & \hvandermonde{x}{k}{n} 
\sqprod{n} \frac{\pqrfac{a_s\xover{x}}{k_r}{q^h} }{\pqrfac{q^h\xover{x}}{k_r}{q^h} } \notag \\
&\times \smallprod{p} \frac{\pqrfac{c_r}{l_r}{q^{h}}}{\pqrfac{q^{h}}{l_r}{q^{h}}} \cdot
\frac{\pqrfac{w}{h(\sumk+\sumvecl)}{q^t}}
{\pqrfac{b_1b_2\cdots b_mw}{h(\sumk+\sumvecl )}{q^t}} 
 \smallprod{p} u_r^{l_r}  \cdot  z^{\sumk} \hpowerq{k}{n} \Bigg) \notag\\
= & \frac{\pqrfac{w}{\infty}{q^t} \pqrfac{a_1a_2\cdots a_n z}{\infty}{q^h}}
{\pqrfac{b_1b_2\cdots b_mw }{\infty}{q^t} \pqrfac{z}{\infty}{q^h}}
\smallprod{p} \frac{\pqrfac{c_ru_r}{\infty}{q^h}}{\pqrfac{u_r}{\infty}{q^h}}
\notag \\
\times \multisum{j}{m} & \Bigg( \htvandermonde{y}{j}{m}{t}
\sqprod{m} \frac{\pqrfac{b_s\xover{y}}{j_r}{q^t} }{\pqrfac{q^t\xover{y}}{j_r} {q^t}} \notag \\
& \times  \frac{\pqrfac{z}{t\sumj}{q^h}}{\pqrfac{a_1a_2\cdots a_nz}{t\sumj}{q^h}}
\smallprod{p} \frac{\pqrfac{u_r}{t\sumj}{q^h}}{\pqrfac{c_ru_r}{t\sumj}{q^h}}
\cdot w^{\sumj} \htpowerq{j}{m}{t} 
\Bigg). \label{an-m-lauricella1}
\end{align}
Note that we have used product of $n$ $q$-binomial theorems to deal with a product of the form
$$
\smallprod{n} \frac{\pqrfac{a_rx_r}{\infty}{q}}{\pqrfac{x_r}{\infty}{q}}
$$
So the list at the end of \S \ref{sec:GB-Heine} is not a complete list. 

\section{Summary and credits}\label{sec:summary}

The objective of this paper is to study Heine's method as applied to series of type $A$. It is a continuation of our work \cite{GB2017}, which arose while studying Andrews and Berndt~\cite[Ch.~ 1]{AB2009}. We conclude by summarizing our work and pointing out related ideas in the literature. 

In \S \ref{sec:GB-Heine}, we gave some examples of multiple series generalizations of the bibasic Heine transformation. These (and other similar formulas) can be obtained using the $q$-binomial theorems in \cite{Milne1997, ML1995, GK1996, Ka2012, BS2018a}. 
Our results transform an $n$-dimensional series to a multiple of an $m$-dimensional series. Early examples of such identities were given by  Gessel and Krattenthaler \cite{GeK1997}, Kajihara \cite{Ka2004} and Kajihara and Noumi \cite{KN2003}.

Previously, Gustafson and Krattenthaler \cite{GK1996, GK1997} gave $A_n$ Heine transformation formulas.  However, Heine's method is not applicable to obtain their results. 
It would be interesting to see whether their work extends to generalize the bibasic Heine transformation, and examine the generalizations of Ramanujan's $_2\phi_1$ transformations (if any), as has been done in \S \ref{sec:ramanujan-2p1}. 

 Section \ref{sec:ramanujan-2p1} contains extensions of some of Ramanujan's $_2\phi_1$ transformations from Andrews and Berndt \cite[Ch.\ 1]{AB2009}. In this regard, the usefulness of the bibasic Heine transformation was pointed out in \cite{GB2017}. These special cases are chosen to bring the multivariable identities close to Ramanujan's own identities. They are motivated by previous work of, for example, Milne \cite{Milne1992} and Krattenthaler \cite{Krat2001} (who proved a conjecture of Warnaar). We remark that there is  much more in Andrews' work \cite{An1966a} (where Heine's method is examined in detail) which we expect to study on another occasion.

In \S \ref{sec:q-euler} we demonstrate that Heine's method can be applied to transformation formulas too, and gain some understanding of when this method applies. This leads to the master theorem of \S \ref{sec:qlauricella}, which follows by iterating Heine's method. This extends the bibasic version of Andrews' transformation formula for the $q$-Lauricella function given 
by Agarwal, Jain and Choi \cite{AJC2017}, and independently, by the author \cite{GB2017}. 
Previously,  Milne~\cite[\S 7]{Milne1997} gave such results, but we feel our approach and results are truer to Andrews' original formulation of his results.  

We have already noted that the master theorem applies to using transformation formulas as in \S \ref{sec:q-euler}.  In addition, we mention that the multivariable $q$-binomial theorem of Macdonald \cite{Mac2013} (see Kaneko \cite[Th.\ 3.5]{JK1996}) is also amenable to Heine's method, and can be intermixed with above mentioned $q$-binomial theorems.  The Property $H$ follows from the homogeneity of the Macdonald polynomials. If we apply Heine's method to Macdonald's $q$-binomial theorem, we obtain bibasic Heine transformation formulas different in character 
from, but perhaps not as deep as, the Heine transformation formula given by Baker and Forrester \cite{BF1999}. However, what is interesting in our approach is that we can intermix the two kinds of series. See also Warnaar \cite[Th.\ 2.3]{SOW2010} for a multivariable, dimension changing, formula---which extends Heine's second transformation formula \cite[Eq.\ (1.4.5)]{GR90}.

\subsection*{Acknowledgements} We thank the anonymous referee for helpful suggestions. 



\end{document}